\crefname{equation}{}{}
\def\Imm{\operatorname{Imm}}
\newcommand{\CC}{{\mathbb C}}
\newcommand{\Mat}{\text{Mat}}
\newcommand{\res}[1]{|_{#1}} %for matrices restricted to certain indices. For M restricted to T, it's M\res{T}
\newcommand{\gr}[1]{\Gamma[#1]} %in case we want to change the notation for the union of graphs
\newcommand{\gro}[1]{\Gamma(#1)} %for the graph of a single permutation
\newcommand{\lrangle}[1]{\langle #1 \rangle}
\newcommand{\bbox}[1]{\mathbf{B}_{#1}}
\newcommand{\multichoose}[2]{
\left.\mathchoice
  {\left(\kern-0.48em\binom{#1}{#2}\kern-0.48em\right)}
  {\big(\kern-0.30em\binom{\smash{#1}}{\smash{#2}}\kern-0.30em\big)}
  {\left(\kern-0.30em\binom{\smash{#1}}{\smash{#2}}\kern-0.30em\right)}
  {\left(\kern-0.30em\binom{\smash{#1}}{\smash{#2}}\kern-0.30em\right)}
\right.}
\newcommand{\specialcell}[1]{\ifmeasuring@#1\else\omit$\displaystyle#1$\ignorespaces\fi}
\newtheorem{prop}{Proposition}[section] %[section] numbers things section.subsection.thmnumber
\newtheorem{cor}[prop]{Corollary} %[prop] in this position means cor environment uses same counter as prop
\newtheorem{lem}[prop]{Lemma}
\newtheorem{conj}[prop]{Conjecture}
\newtheorem{thm}[prop]{Theorem}
\theoremstyle{definition}
\newtheorem{defn}[prop]{Definition}
\newtheorem{ex}[prop]{Example}
\numberwithin{equation}{section} %puts section # in equation numbering
\title[$k$-positivity of 1324- and 2143-avoiding dual canonical basis elements]{$k$-positivity of dual canonical basis elements from 1324- and 2143-avoiding Kazhdan-Lusztig immanants}
\author{Sunita Chepuri$^*$}\thanks{$^*$University of Michigan, 2074 East Hall, 530 Church Street. Ann Arbor, MI 48109, chepuri@umich.edu}
\author{Melissa Sherman-Bennett$^\dagger$}\thanks{$^\dagger$University of Michigan, 2074 East Hall, 530 Church Street. Ann Arbor, MI 48109, msherben@umich.edu}
\begin{document}

\maketitle
\begin{abstract}
In this note, we show that certain dual canonical basis elements of $\CC[SL_m]$ are positive when evaluated on \emph{$k$-positive matrices}, matrices whose minors of size $k \times k$ and smaller are positive. Skandera showed that all dual canonical basis elements of $\CC[SL_m]$ can be written in terms of \emph{Kazhdan-Lusztig immanants}, which were introduced by Rhoades and Skandera. We focus on the basis elements which are expressed in terms of Kazhdan-Lusztig immanants indexed by 1324- and 2143-avoiding permutations. This extends previous work of the authors on Kazhdan-Lusztig immanants and uses similar tools, namely Lewis Carroll's identity (also known as the Desnanot-Jacobi identity).
    %\\
    %\noindent MSC Classifications: 15A15 (primary); 05E10, 20C30 (secondary)
\end{abstract}

% \begin{itemize}
% %    \item Matrix obtained from some other matrix by choosing a multiset of rows and columns (currently $M(R, C)$)
%     \item Need to write $\Imm_v(x_{R,C})$ or something to indicate $\Imm_v$ evaluated on a matrix of variables. Alternately we could define $X(R,C)$ (or similar) to be the matrix of variables indexed by $R,C$.
%     \item Because we care about the order of $R, C$, maybe we should write them as weakly increasing vectors instead of multisets? We also need to be clear about what ``deleting" an element of $R$ or $C$ does to the ordering.
%     \item Given $R,C$ weakly increasing multisets, we could introduce the notion of $(R,C)$-admissible skew-shapes (shapes $\lambda/\mu$ where for any $i,j$ with $\mu_i=\mu_j$ and $\lambda_i=\lambda_j$, we have $r_i \neq r_j$, and same for the transposes). Then mention that $(R,C)$-admissible is equivalent to $M(R, C)|_{\lambda/\mu}$ having no repeated rows or columns.
%     \item The notion of the ``image" or a row or column after some deletions is probably useful; $\delta$ is the name of the map that shifts indices appropriately right now, because that's what we used in the first paper.
% \end{itemize}

\section{Introduction}\label{sec:intro}

Given a function $f:S_n \to \CC$, the \emph{immanant} associated to $f$, $\Imm_f X:\Mat_{n \times n}(\CC) \to \CC$, is the function
\begin{equation}
\Imm_f X:= \sum_{w \in S_n} f(w) ~x_{1, w(1)} \cdots x_{n, w(n)},
\end{equation}
where the $x_{i,j}$ are indeterminates.  We evaluate $\Imm_f X$ on a matrix $M=(m_{i,j})$ by specializing $x_{i,j}$ to $m_{i,j}$ for all $i,j$.

Immanants are a generalization of the determinant, where $f(w)=(-1)^{\ell(w)}$, and the permanent, where $f(w)=1$.  %Historically, researchers have been particularly interested in \emph{character immanants}, where $f$ is an irreducible character of $S_n$~\cite{Littlewood}.
Positivity properties of immanants have been studied since the early 1990's \cite{GJ,Greene,StemConj,HaiConj}.  One of the main results in this area is that when $f$ is an irreducible character of $S_n$, then $\Imm_f(X)$ is nonnegative on \emph{totally nonnegative matrices}, that is, matrices with all nonnegative minors~\cite{StemTNN}.
In this note, we will investigate positivity properties of functions closely related to \emph{Kazhdan--Lusztig immanants}, introduced by Rhoades and Skandera~\cite{RS}.

\begin{defn}
	Let $v \in S_n$. The \emph{Kazhdan-Lusztig immanant} $\Imm_v X: \Mat_{n \times n}(\CC) \to \CC$ is given by 
	\begin{equation} \label{eq:immFormula1}
	\Imm_v X:= \sum_{w \in S_n} (-1)^{\ell(w)-\ell(v)} P_{w_0w, w_0v}(1) ~x_{1, w_1} \cdots x_{n, w_n}
	\end{equation}
	where $P_{x, y}(q)$ is the Kazhdan-Lusztig polynomial associated to $x,y \in S_n$, $w_0 \in S_n$ is the longest permutation, and we write permutations $w=w_1w_2\dots w_n$ in one-line notation. (For the definition of $P_{x, y}(q)$ and their basic properties, see e.g. \cite{BB}.)
\end{defn}

Our interest in Kazhdan--Lusztig immanants stems from their connection to the dual canonical basis of $\CC[SL_m]$. Using work of Du~\cite{Du}, Skandera~\cite{Skan} showed that the dual canonical basis elements of $\CC[SL_m]$ are exactly Kazhdan--Lusztig immanants evaluated on matrices of indeterminates with repeated rows and columns.

Let $X=(x_{ij})$ be the $m \times m$ matrix of variables $x_{ij}$ and let $\multichoose{[m]}{n}$ denote the set of $n$-element multisets of $[m]:=\{1, \dots, m\}$.  For $R, C\in \multichoose{[m]}{n}$ with $R=\{r_1 \leq \cdots \leq r_n\}$ and $C=\{c_1\leq \cdots \leq c_n\}$, we write $X(R,C)$ to denote the matrix $(x_{r_i, c_j})_{i, j=1}^n$ (see Definition~\ref{defn:RC}).

\begin{prop}[\protect{\cite[Theorem 2.1]{Skan}}] \label{prop:canonicalImm}
The dual canonical basis of $\CC[SL_m]$ consists of the nonzero elements of the following set:
\[\left \{\Imm_v X(R, C): v \in S_n \text{ for some } n \in \mathbb{N} \text{ and } R, C \in \multichoose{[m]}{n}\right \}.
\]

%Let $R, C \in \multichoose{[m]}{n}$ and let $v \in S_n$. Then $\Imm_v X(R, C)$ is either identically zero or a dual canonical basis element of $\CC[SL_m]$. Moreover, every dual canonical basis element of $\CC[SL_m]$ can be written as 
%\[ (\det X)^{-k} \cdot \Imm_w X(R, C)
%\]
%for some $k, n \in \ZZ$, $w \in S_n$ and $R, C \in \multichoose{[m]}{n}$.
\end{prop}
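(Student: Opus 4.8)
The proof rests on Du's explicit description of the dual canonical basis of the quantized coordinate ring \cite{Du}, so my plan is to recall that description, match it term by term with the Kazhdan--Lusztig immanant formula~\eqref{eq:immFormula1}, and then specialize at $q=1$. First I would work in $\A := \CC_q[\Mat_m]$ with its standard ordered monomial basis, whose elements are products $x_{i_1,j_1}\cdots x_{i_n,j_n}$ indexed by pairs of words $(\mathbf{i},\mathbf{j})$ normalized so that the row word $\mathbf{i}$ is weakly increasing. Du shows that $\A$ has a dual canonical basis whose transition matrix against this monomial basis is unitriangular for a dominance-type order and whose entries are, up to sign, Kazhdan--Lusztig polynomials attached to the indexing words through the Schur--Weyl/Hecke-algebra combinatorics of tensor space. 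I would then record precisely which data index a nonzero basis element: writing $R$ for the multiset underlying $\mathbf{i}$, $C$ for the multiset underlying $\mathbf{j}$, and $v\in S_n$ for the permutation matching the two words, Du's description (equivalently, the Peter--Weyl decomposition of $\CC_q[GL_m]$ into pairs of semistandard tableaux of a common shape) singles out a distinguished set of $v$'s --- coset representatives determined by the multiplicities in $R$ and $C$ --- for which the element is nonzero, with distinct admissible triples giving distinct basis elements.

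Next I would carry out the dictionary between Du's formula and $\Imm_v X(R,C)$. For fixed $R=\{r_1\le\cdots\le r_n\}$ and $C=\{c_1\le\cdots\le c_n\}$, the monomials that can appear are exactly the $\prod_{i=1}^n x_{r_i,c_{w(i)}}$ for $w\in S_n$, each distinct monomial occurring for a whole coset of permutations. The immanant $\Imm_v X(R,C)=\sum_{w\in S_n}(-1)^{\ell(w)-\ell(v)}P_{w_0w,\,w_0v}(1)\,\prod_i x_{r_i,c_{w(i)}}$ is precisely such a sum, and the $w_0$-twist in~\eqref{eq:immFormula1} is exactly what converts Du's Kazhdan--Lusztig polynomials into the form appearing there (this twist records that one works with the canonical basis of the ``big cell,'' i.e. with inverse Kazhdan--Lusztig polynomials, which is also the origin of $w_0$ in the definition of $\Imm_v$). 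After collecting terms, Du's dual canonical basis element attached to $(R,C,v)$ is, on the nose, $\Imm_v X(R,C)|_{q=1}$. Specializing $q=1$ is harmless since the canonical basis, its dual, and all the relevant transition matrices are defined over $\ZZ[q,q^{-1}]$, so the $q=1$ images of the dual canonical basis of $\CC_q[\Mat_m]$ form a basis of $\CC[\Mat_m]$.

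Finally I would transfer the statement to $\CC[SL_m]$. Since $\CC[SL_m]=\CC[\Mat_m]/(\det-1)$ and the quantum determinant is central, with the dual canonical basis of $\CC_q[GL_m]$ compatible with the corresponding quotient to $\CC_q[SL_m]$ (again by Du), passing to $q=1$ shows that the images of the nonzero $\Imm_v X(R,C)$ are exactly the dual canonical basis of $\CC[SL_m]$, and conversely every dual canonical basis element arises this way.

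I expect the main obstacle to be bookkeeping rather than a new idea: pinning down Du's indexing conventions --- the precise ordered monomial basis, the dominance order, and the placement of $w_0$ --- so that they line up with the Rhoades--Skandera normalization of $\Imm_v$, and giving a clean characterization of which triples $(R,C,v)$ yield a nonzero element and when two triples yield the same one. A secondary point to handle carefully is the passage from $\Mat_m$ (or $GL_m$) to $SL_m$, i.e. checking that quotienting by $\det-1$ neither merges nor kills any of the relevant basis elements. The genuinely hard input is Du's theorem itself, which I would treat as a black box; reproving it from scratch would be the bulk of a self-contained argument.
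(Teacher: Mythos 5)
The paper does not prove this proposition at all: it is imported verbatim as \cite[Theorem 2.1]{Skan}, so there is no in-paper argument to compare against. Your sketch correctly reconstructs the route the cited proof actually takes --- Du's Kazhdan--Lusztig-polynomial description of the dual canonical basis of the quantized coordinate ring, the term-by-term match with \eqref{eq:immFormula1} (including the $w_0$-twist and the collapsing of permutations into cosets when $R$ or $C$ has repeated entries), specialization at $q=1$, and the passage from $\Mat_m$ to $SL_m$. So the approach is the right one and consistent with the source; just be aware that, as written, your argument is an outline whose substantive steps (the indexing dictionary, the characterization of which triples $(R,C,v)$ give nonzero and distinct elements, and the check that quotienting by $\det-1$ neither merges nor kills basis elements) are deferred to Du and to ``bookkeeping,'' which is acceptable here only because the paper itself treats the entire statement as a black box.
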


%Using work of Du~\cite{Du}, Skandera showed that the dual canonical basis elements of $\CC[SL_n]$ are exactly the nonzero Kazhdan--Lusztig immanants evaluated on a matrix $X$ of indeterminates with repeated rows and columns.\SComment{$\mathcal{O}$ vs $\CC$?} \Comment{Switch statements to $SL_m$ and add sth in background about $GL_m$ situation.}

The positivity properties of dual canonical basis elements have been of interest essentially since their definition, and are closely related to the study of total positivity.  In 1994, Lusztig~\cite{LusTPGr} defined the totally positive part $G_{>0}$ of any reductive group $G$.  He also showed that all elements of the dual canonical basis of $\mathcal{O}(G)$ are positive on $G_{>0}$.  Fomin and Zelevinsky~\cite{FZTNNSemisimple} later proved that for semisimple groups, $G_{>0}$ is precisely the subset of $G$ where all \emph{generalized minors} are positive. Generalized minors are dual canonical basis elements corresponding to the fundamental weights of $G$ and their images under Weyl group action.

Here, we study signs of dual canonical basis elements on a natural generalization of $G_{>0}$. Let $S$ be some subset of generalized minors and $G_{>0}^S$ the subset of $G$ where all elements of $S$ are positive. Which dual canonical basis elements are positive on all elements of $G_{>0}^S$? In this note, we consider the case where $G=SL_m$ and $S$ consists of the generalized minors corresponding to the first $k$ fundamental weights and their images under the Weyl group action.  In this situation, $G_{>0}^S$ is the set of \emph{$k$-positive matrices}, matrices where all minors of size $k$ and smaller are positive.  Cluster algebra structures, topology, and variation diminishing properties of these matrices have been previously studied in~\cite{BCM,CKST,Choud1,Choud2}.

We call a matrix functional \emph{$k$-positive} if it is positive when evaluated on all $k$-positive matrices.  Our main result is as follows:

\begin{thm}  \label{thm:main}
	Let $v \in S_n$ be $1324$-, $2143$-avoiding and suppose that for all $i<j$ with $v_i<v_j$, we have $j-i \leq k$ or $v_j-v_i \leq k$. Let $R, C \in \multichoose{[m]}{n}$. Then $\Imm_v X(R,C)$ is identically zero or it is $k$-positive.
\end{thm}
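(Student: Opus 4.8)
\emph{Proof proposal.} The strategy is to reduce to the main theorem of our earlier work — the case where $R$ and $C$ have no repeated entries — and then induct on $n$ to remove the repeats. Indeed, if $R$ and $C$ are sets, then $X(R,C)$ is an $n\times n$ submatrix of $X$, and since every submatrix of a $k$-positive matrix is again $k$-positive (any $\le k$ minor of the submatrix is a $\le k$ minor of the ambient matrix), $\Imm_v X(R,C)$ is then identically zero or $k$-positive by the earlier result. So from now on assume $R$ or $C$ has a repeat.

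The first ingredient is a vanishing criterion for repeats occurring at ascents. Suppose $R=\{r_1\le\cdots\le r_n\}$ has $r_a=r_{a+1}$. Then rows $a$ and $a+1$ of $X(R,C)$ coincide, so each monomial of $\Imm_v X(R,C)$ is fixed under $w\mapsto ws_a$, giving
\[
\Imm_v X(R,C)=\tfrac12\sum_{w\in S_n}\bigl(f_v(w)+f_v(ws_a)\bigr)\,x_{r_1,c_{w_1}}\cdots x_{r_n,c_{w_n}},\qquad f_v(w):=(-1)^{\ell(w)-\ell(v)}P_{w_0w,\,w_0v}(1).
\]
If $a$ is an ascent of $v$ (that is, $v_a<v_{a+1}$), then $\ell(w_0vs_a)<\ell(w_0v)$, so the right-descent invariance of Kazhdan--Lusztig polynomials gives $P_{w_0ws_a,\,w_0v}=P_{w_0w,\,w_0v}$ for all $w$; since $\ell(w)$ and $\ell(ws_a)$ have opposite parities, the bracket is zero and $\Imm_v X(R,C)\equiv 0$. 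The transposed statement disposes of a repeated column $c_a=c_{a+1}$ at an ascent of $v^{-1}$. Hence we may assume every repeat of $R$ (resp. $C$) sits at a descent of $v$ (resp. $v^{-1}$); in particular $\Imm_v X(R,C)$ need not vanish here (e.g. $\Imm_{21}X(\{1,1\},\{1,2\})=x_{1,1}x_{1,2}$).

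For the remaining case I would run the Lewis Carroll induction of our earlier paper essentially verbatim. There, one uses a form of Lewis Carroll's (Desnanot--Jacobi) identity specialized to Kazhdan--Lusztig immanants that peels a row and a column off $X$: it is available because $1324$- and $2143$-avoidance forces $v$ to have suitable local structure at appropriate positions, the auxiliary permutations $v^{(0)},\dots,v^{(4)}$ it produces lie in $S_{n-1}$ or $S_{n-2}$ and are again $1324$- and $2143$-avoiding satisfying the size hypothesis (deleting entries can only shrink a gap $j-i$ or a value gap $v_j-v_i$), and the identity writes $\Imm_v X$ times a cofactor immanant $\Imm_{v^{(0)}}$ as a difference of two products $\Imm_{v^{(1)}}\Imm_{v^{(2)}}-\Imm_{v^{(3)}}\Imm_{v^{(4)}}$ of immanants that, by the same structural input, cannot cancel — which, with all the smaller immanants positive by induction, forces $\Imm_v X>0$. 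Carrying out the identical reduction on $X(R,C)$ yields auxiliary immanants $\Imm_{v^{(i)}}X(\cdot,\cdot)$ on smaller symmetric groups, each identically zero or $k$-positive by the inductive hypothesis on $n$; the difference of products is then $\ge 0$ on $k$-positive matrices and is $>0$ exactly when $\Imm_v X(R,C)\not\equiv 0$. If the cofactor $\Imm_{v^{(0)}}X(\cdot,\cdot)$ is not identically zero it is $k$-positive, and dividing through gives the conclusion; if instead it — and the cofactor of every admissible choice of peeled row and column — is identically zero, one reads off from the displayed formula that $\Imm_v X(R,C)\equiv 0$, and there is nothing to prove.

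The main obstacle, exactly as in our earlier paper, is the permutation combinatorics of the Lewis Carroll step: exhibiting, for a $1324$- and $2143$-avoiding $v$ with the size hypothesis, a choice of peeled row and column for which all the auxiliary $v^{(i)}$ stay $1324$- and $2143$-avoiding with the size hypothesis \emph{and} the two right-hand products are sign-coherent, so that non-vanishing propagates to strict positivity. The genuinely new content is checking that this choice and its sign analysis are unaffected by repeated rows and columns in $X(R,C)$; any repeats inherited by the auxiliary matrices are absorbed by the inductive hypothesis, with the vanishing criterion above covering the ascent case.
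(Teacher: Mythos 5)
Your overall architecture matches the paper's: reduce to a determinantal statement for $1324$-, $2143$-avoiding $v$, run the Lewis Carroll induction from the earlier work, and separately understand when $\Imm_v X(R,C)$ vanishes. Your reduction for $R,C$ honest sets is fine (a genuine submatrix of a $k$-positive matrix is $k$-positive), and your Kazhdan--Lusztig argument that a repeat $r_a=r_{a+1}$ at an ascent of $v$ forces $\Imm_v X(R,C)\equiv 0$ is correct. But the proposal defers exactly the step that constitutes the new content of this theorem, and the fallback you offer for it is logically backwards. The Lewis Carroll identity reads $\det(A)\det(A^{1,d}_{a,b})=\det(A^1_a)\det(A^d_b)-\det(A^1_b)\det(A^d_a)$ with $A=M(R,C)\res{\gr{v,w_0}}$; to conclude anything about $\det(A)$ you must show the cofactor $\det(A^{1,d}_{a,b})$ is \emph{not} identically zero. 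Your proposed escape --- ``if the cofactor of every admissible choice of peeled row and column is identically zero, one reads off that $\Imm_v X(R,C)\equiv 0$'' --- does not follow: when the cofactor vanishes the identity degenerates to $0=\text{RHS}$ and says nothing about $\det(A)$. The correct resolution, and the bulk of the paper's proof of Theorem~\ref{thm:detSign}, is to \emph{prove} that this situation never arises: one formulates non-vanishing as $(R,C)$-admissibility of $\gr{v,w_0}$ (no two rows or columns with equal labels have equal support) and then verifies, by a careful analysis of how deleting row $b$/column $1$ and row $a$/column $d$ (together with the sandwiched region $Q$) changes supports and labels, that the two factors $\det(A^1_b)$ and $\det(A^d_a)$ remain admissible. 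Combined with the sign-coherence of the two right-hand products (Proposition~\ref{prop:sgn}), this forces the right-hand side, hence the cofactor, hence $\det(A)$, to be nonzero with the predicted sign.

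Relatedly, your vanishing criterion (repeats at ascents) is only a sufficient condition for vanishing and is never connected to the induction; what the induction actually needs is the converse direction --- that admissibility (non-vanishing) propagates to the smaller determinants --- and that is precisely the support/label bookkeeping you flag as ``the genuinely new content'' but do not supply. Until that propagation is carried out, the induction does not close, so the proof is incomplete at its central step.
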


We also characterize precisely when the functions $\Imm_v X(R, C)$ appearing in \cref{thm:main} are identically zero (see Theorem~\ref{thm:main-sq}).

Theorem~\ref{thm:main} extends the results of~\cite{CSB}, in which we showed the function $\Imm_v X([m],[m])$ is $k$-positive under the assumptions of \cref{thm:main}. Our techniques here are similar to~\cite{CSB}. Note that Theorem~\ref{thm:main} does not follow from \cite[Theorem 1.4]{CSB} because for $M$ $k$-positive, $M(R, C)$ is $k$-nonnegative rather than $k$-positive.

%This theorem extends the results of~\cite{CSB}, which deals with the case where $\widetilde{X}=X$.  In other words,~\cite{CSB} investigates evaluating immanants $\Imm_v(X)$ on $k$-positive matrices.  Our techniques are similar to~\cite{CSB}.  However, Theorem~\ref{thm:main} does not follow directly from~\cite{CSB}, as $\widetilde{X}$ is a $k$-nonnegative matrix, a matrix where all minors of size $k$ and smaller are nonnegative, rather than a $k$-positive matrix.  This means that the theorem above deals with evaluating $\Imm_v(X)$ on $k$-nonnegative matrices.

Rephrasing Theorem~\ref{thm:main} in terms of dual canonical basis elements, we have the following corollary.

\begin{cor}\label{cor:main}
Let $F(X)=\Imm_v X(R,C)$ be an element of the dual canonical basis of $\CC[SL_m]$. Suppose $v$ is $1324$-, $2143$-avoiding and for all $i<j$ with $v_i<v_j$, we have $j-i \leq k$ or $v_j-v_i \leq k$. Then $F(X)$ is $k$-positive.
\end{cor}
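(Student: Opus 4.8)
The plan is to read off \cref{cor:main} directly from \cref{thm:main} together with Skandera's description of the dual canonical basis in \cref{prop:canonicalImm}; there is no substantive obstacle, as all of the real work is in \cref{thm:main}. By hypothesis $F(X) = \Imm_v X(R,C)$ is an element of the dual canonical basis of $\CC[SL_m]$, so by \cref{prop:canonicalImm} it is in particular a \emph{nonzero} element of $\CC[SL_m]$; hence $\Imm_v X(R,C)$ is not the zero polynomial in the entries $x_{ij}$. On the other hand, $v$ is assumed to be $1324$- and $2143$-avoiding and to satisfy the condition that for all $i<j$ with $v_i < v_j$ either $j - i \le k$ or $v_j - v_i \le k$. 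These are exactly the hypotheses of \cref{thm:main}, which therefore applies and tells us that $\Imm_v X(R,C)$ is identically zero or $k$-positive. Since we have just observed that it is not identically zero, it must be $k$-positive, i.e.\ $F(X)$ is $k$-positive, as claimed.

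The only point requiring a word of care is the compatibility of the notion of ``identically zero'' in \cref{thm:main} with the notion of ``nonzero element of $\CC[SL_m]$'' in \cref{prop:canonicalImm}: if $\Imm_v X(R,C)$ were the zero polynomial in the $x_{ij}$ it would a fortiori represent the zero element of the coordinate ring $\CC[SL_m]$, so every dual canonical basis element automatically corresponds to a polynomial functional that is not identically zero, which is all the argument uses. One could alternatively invoke \cref{thm:main-sq}, which identifies exactly which triples $(v,R,C)$ give the zero functional, in order to spell out precisely which instances of the hypotheses of \cref{cor:main} correspond to honest basis elements; but this refinement is not needed for the implication above. Thus the proof consists of the single deduction: dual canonical basis element $\Rightarrow$ not identically zero $\Rightarrow$ ($k$-positive, by \cref{thm:main}).
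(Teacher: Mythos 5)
Your proposal is correct and matches the paper's (implicit) argument exactly: the paper presents \cref{cor:main} as a direct rephrasing of \cref{thm:main}, using precisely the fact from \cref{prop:canonicalImm} that dual canonical basis elements are by definition the \emph{nonzero} functions of this form, so the ``identically zero'' alternative in \cref{thm:main} is excluded. Nothing further is needed.
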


The paper is organized as follows. Section~\ref{sec:prelim} gives background on the objects we will be using to prove Theorem~\ref{thm:main}. It includes several useful lemmas proven in~\cite{CSB}.  Section~\ref{sec:proof} contains the proof of Theorem~\ref{thm:main}.  We conclude with a few thoughts on future directions in Section~\ref{sec:future}.

\section{Background}\label{sec:prelim}
In an abuse of notation, we frequently drop curly braces around sets appearing in subscripts and superscripts.

\subsection{Background on 1324 and 2143-avoiding Kazhdan-Lusztig immanants}

For integers $i\leq j$, let $[i, j]:=\{i, i+1, \dots, j-1, j\}$. We abbreviate $[1, n]$ as $[n]$. For $v \in S_n$, we write $v_i$ or $v(i)$ for the image of $i$ under $v$. We use the notation $<$ for both the usual order on $[n]$ and the Bruhat order on $S_n$; it is clear from context which is meant. To discuss non-inversions of a permutation $v$, we'll write $\lrangle{i, j}$ to avoid confusion with a matrix index or point in the plane. In the notation $\lrangle{i, j}$, we always assume $i<j$. We use the notation $\multichoose{[m]}{n}$ for the collection of $n$-element multi-sets of $[m]$. We always list the elements of a multiset in increasing order.

We are concerned with two notions of positivity, one for matrices and one for immanants.

\begin{defn}
Let $k\geq 1$. A matrix $M \in \Mat_{n \times n}(\CC)$ is \emph{$k$-positive} if all minors of size at most $k$ are positive. 

An immanant $\Imm_f(X): \Mat_{n \times n}(\CC) \to \CC$ is \emph{$k$-positive} if it is positive on all $k$-positive matrices.
\end{defn}

Note that $k$-positive matrices have positive $1 \times 1$ minors, i.e. entries, and so are real matrices. 

\begin{ex}\label{ex:k-pos} The matrix
\[ M=\begin{bmatrix}
    22 & 18 & 6 & 3 \\
    8 & 7 & 3 & 2 \\
    2 & 2 & 1 & 2 \\
    1 & 2 & 2 & 6
    \end{bmatrix}
    \]
     is $2$-positive but the upper left $3\times 3$ submatrix has negative determinant, so is not $3$-positive or $4$-positive (totally positive). 
\end{ex}

Our results on $k$-positivity of Kazhdan-Lusztig immanants involve pattern avoidance.

\begin{defn}\label{defn:patternAvoidance}
	Let $v \in S_n$, and let $w\in S_m$. Suppose $v=v_1\cdots v_n$ and $w=w_1 \cdots w_m$ in one-line notation. The pattern $w_1 \cdots w_m$ \emph{occurs} in $v$ if there exists $1\leq i_1< \dots <i_m\leq n$ such that $v_{i_1} \cdots v_{i_m}$ are in the same relative order as $w_1 \cdots w_m$. Additionally, $v$ \emph{avoids} the pattern $w_1 \cdots w_m$ if it does not occur in $v$.
\end{defn}

Certain Kazdhan-Lusztig immanants have a very simple determinantal formula, which involves the \emph{graph} of an interval.

\begin{defn}\label{defn:graph}
For $v \in S_n$, the \emph{graph} of $v$, denoted $\gro{v}$, refers to its graph as a function. That is, $\gro{v}:=\{(1, v_1), \dots, (n, v_n)\}$.
For $v, w \in S_n$, the graph of the Bruhat interval $[v, w]$ is the subset of $[n]^2$ defined as $\gr{v,w}:=\{(i, u_i): u \in [v, w], i=1, \dots, n\}$.
\end{defn} 

We think of an element $(i,j) \in \gr{v,w}$ as a point in row $i$ and column $j$ of an $n \times n$ grid, indexed so that row indices increase going down and column indices increase going right (see \cref{ex:graph}). A \emph{square} or \emph{square region} in $\gr{v,w}$ is a subset of $\gr{v,w}$ which forms a square when drawn in the grid.

We will also need the following notion on matrices. 

\begin{defn}
Let $P \subset [n]^2$ and let $M=(m_{ij})$ be an $n \times n$ matrix. The \emph{restriction} of $M$ to $P$, denoted $M \res{P}$, is the matrix with entries
\[m'_{ij}=\begin{cases} m_{ij} & (i,j) \in P\\
0 & \text{ else}.
\end{cases}
\]
\end{defn}

\begin{ex}\label{ex:graph}
Consider $v=2413$ in $S_4$. We have $[v, w_0]=\{2413, 4213, 3412, 2431, 4312, 4231, 3421\}$, and so $\gr{v,w_0}$ is as follows.
\begin{center}
   \includegraphics[height=0.15\textheight]{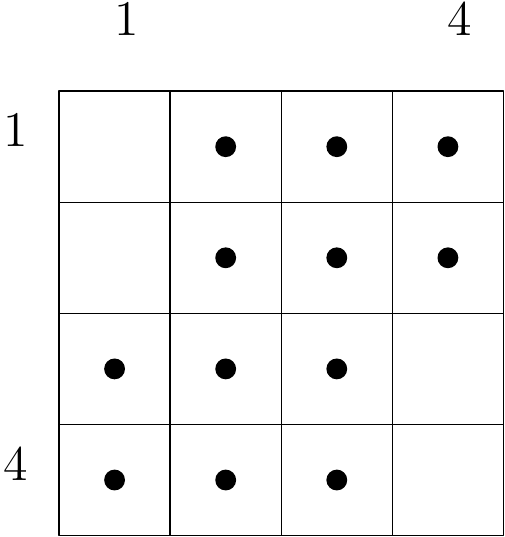}
\end{center}
If $M$ is the matrix from Example~\ref{ex:k-pos}, then  \[ M \res{\gr{v,w_0}}=\begin{bmatrix}
    0 & 18 & 6 & 3 \\
    0 & 7 & 3 & 2 \\
    2 & 2 & 1 & 0 \\
    1 & 2 & 2 & 0
    \end{bmatrix}.
    \]
Note that $v$ avoids patterns 1324 and 2143.
\end{ex}

We can now state a simple determinantal formula for certain Kazhdan-Lusztig elements. This follows from results of \cite{Sjo}.

\begin{prop}[\protect{\cite[Corollary 3.6]{CSB}}] \label{prop:immDet} Let $v \in S_n$ avoid $1324$ and $2143$. Then

\begin{equation}\label{eq:immDet}
\Imm_v(X)=(-1)^{\ell(v)} \det(X\res{\gr{v, w_0}}).
\end{equation}

\end{prop}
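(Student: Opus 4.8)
The plan is to prove \eqref{eq:immDet} by expanding both sides as polynomials in the entries $x_{ij}$ and comparing, for each $w\in S_n$, the coefficient of the squarefree monomial $x_{1,w_1}\cdots x_{n,w_n}$. By the definition \eqref{eq:immFormula1}, that coefficient on the left-hand side is $(-1)^{\ell(w)-\ell(v)}P_{w_0w,w_0v}(1)$. On the right-hand side, the Leibniz expansion of $\det(X\res{\gr{v,w_0}})$ contributes the monomial $x_{1,w_1}\cdots x_{n,w_n}$ with sign $(-1)^{\ell(w)}$ exactly when every entry $(i,w_i)$ survives the restriction, i.e.\ when $\gro{w}\subseteq\gr{v,w_0}$, and contributes $0$ otherwise; multiplying by $(-1)^{\ell(v)}$ and using $(-1)^{2\ell(v)}=1$, the right-hand side of \eqref{eq:immDet} has coefficient $(-1)^{\ell(w)-\ell(v)}$ on those $w$ with $\gro{w}\subseteq\gr{v,w_0}$ and $0$ on the rest. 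So it suffices to show, for every $w\in S_n$, that
\[
P_{w_0w,w_0v}(1)=\begin{cases}1 & \text{if }\gro{w}\subseteq\gr{v,w_0},\\ 0 & \text{otherwise.}\end{cases}
\]

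First I would handle the Kazhdan--Lusztig polynomials. Left multiplication by $w_0$ is an anti-automorphism of Bruhat order, so $P_{w_0w,w_0v}(q)=0$ unless $w_0w\le w_0v$, equivalently $v\le w$, i.e.\ $w\in[v,w_0]$. For $w$ in this interval I claim $P_{w_0w,w_0v}(1)=1$, and this is the first place the hypotheses on $v$ are used: complementing one-line notation (which is exactly left multiplication by $w_0$) carries occurrences of $1324$ to occurrences of $4231$ and occurrences of $2143$ to occurrences of $3412$, so $v$ avoiding $1324$ and $2143$ is equivalent to $w_0v$ avoiding $4231$ and $3412$. By the Lakshmibai--Sandhya smoothness criterion and the classical fact that $P_{u,w}\equiv 1$ for all $u\le w$ exactly when $X_w$ is (rationally) smooth --- or directly by the results of \cite{Sjo} --- it follows that $P_{u,w_0v}(1)=1$ for all $u\le w_0v$. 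Taking $u=w_0w$ gives the claim, and hence $\Imm_v(X)=\sum_{w\in[v,w_0]}(-1)^{\ell(w)-\ell(v)}x_{1,w_1}\cdots x_{n,w_n}$.

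It remains to identify the two index sets, i.e.\ to prove $\{w\in S_n:\gro{w}\subseteq\gr{v,w_0}\}=[v,w_0]$. The inclusion $\supseteq$ is immediate from the definition of $\gr{v,w_0}$: if $w\in[v,w_0]$, then each point $(i,w_i)$ of $\gro{w}$ lies in $\gr{v,w_0}$. The reverse inclusion --- that any permutation whose graph is swept out by elements of the interval must itself lie in the interval --- is the substantive point, and it genuinely uses pattern-avoidance (it fails for general $v$). I would deduce it from \cite{Sjo}: for $v$ avoiding $1324$ and $2143$, the region $\gr{v,w_0}$ has the special shape (a skew Ferrers board, up to reflection) for which Sj\"ostrand identifies the non-attacking full rook placements it supports precisely with the permutation matrices of a Bruhat interval, which here must be $[v,w_0]$. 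I expect this last step to be the main obstacle: it requires importing the combinatorial geometry of \cite{Sjo}, recognizing $\gr{v,w_0}$ as the relevant board under our hypotheses, and verifying that the rook-placement description applies exactly. With that in hand, the coefficient comparison from the first paragraph completes the proof; this is \cite[Corollary 3.6]{CSB}.
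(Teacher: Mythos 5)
Your proposal is correct and follows essentially the same route as the proof of the cited result \cite[Corollary 3.6]{CSB} (the present paper only records the statement, noting that it follows from Sj\"ostrand's work): after the coefficient comparison, the two ingredients are exactly yours, namely that $w_0v$ avoids $4231$ and $3412$ so $X_{w_0v}$ is smooth and $P_{u,w_0v}\equiv 1$, and that Sj\"ostrand's hull theorem identifies $[v,w_0]$ with $\{w:\gro{w}\subseteq\gr{v,w_0}\}$ under these pattern conditions. Your honest flagging of the last step as the substantive import from \cite{Sjo} matches where the cited proof actually does the work.
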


Using \cref{prop:immDet}, we can similarly obtain a simple determinantal formula for certain dual canonical basis elements of $\CC[SL_m]$. Recall from \cref{prop:canonicalImm} that every dual canonical basis element can be expressed as a Kazhdan-Lusztig immanant evaluated on a matrix of indeterminants with repeated rows and columns.

\begin{defn}\label{defn:RC}
Let $R=\{r_1 \leq r_2 \leq \dots \leq r_n\}$ and $C=\{c_1 \leq c_2 \leq \dots \leq c_n\}$ be elements of $\multichoose{[m]}{n}$ and let $M=(m_{ij})$ be an $m \times m$ matrix. We denote by $M(R,C)$ the matrix with $(i,j)$-entry equal to $m_{r_i, c_j}$. We call $r_i$ the \emph{label} of row $i$; similarly, $c_j$ is the label of column $j$.  We view $X(R, C)$ as a function from $\Mat_{m \times m}(\CC)$ to $\Mat_{n \times n}(\CC)$, which takes $M$ to $M(R, C)$.

\end{defn}

Note that our convention is always to list multisets in weakly increasing order, so the row and column labels of $X(R, C)$ are weakly increasing.

\begin{ex}
Let $R=\{1,1,3\}$ and $C=\{2,3,4\}$.  Then 
$$X(R,C)=\begin{bmatrix}
x_{12} & x_{13} & x_{14} \\ 
x_{12} & x_{13} & x_{14} \\ 
x_{32} & x_{33} & x_{34}
\end{bmatrix}.$$
If $M$ is the matrix from Example~\ref{ex:k-pos}, then 
$$M(R,C)=\begin{bmatrix}
18 & 6 & 3 \\ 
18 & 6 & 3 \\ 
2 & 1 & 2
\end{bmatrix}.$$
\end{ex}

We will focus on the dual canonical basis elements $\Imm_v X(R,C)$ where $v$ is 1324- and 2143-avoiding. \cref{prop:immDet} immediately gives a determinantal formula for these immanants.

\begin{lem} \label{lem:dualCanonicalDet} Let $R, C \in \multichoose{[m]}{n}$ and let $v \in S_n$ be $1324$- and $2143$-avoiding. Then 
\begin{equation} \label{eq:dualCanonicalDet} \Imm_v X(R, C)= (-1)^{\ell(v)} \det X(R, C)\res{\gr{v, w_0}}.
\end{equation}
\end{lem}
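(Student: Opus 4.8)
The plan is to derive \cref{lem:dualCanonicalDet} directly from \cref{prop:immDet} by substituting the matrix of indeterminates with repeated rows and columns. Concretely, \cref{prop:canonicalImm} tells us that $\Imm_v X(R, C)$ is obtained from $\Imm_v Y$, where $Y = (y_{ij})$ is the $n \times n$ matrix of indeterminates, by the ring map sending $y_{ij} \mapsto x_{r_i, c_j}$. Since $v$ is $1324$- and $2143$-avoiding, \cref{prop:immDet} gives $\Imm_v Y = (-1)^{\ell(v)} \det(Y\res{\gr{v, w_0}})$, an identity of polynomials in the $y_{ij}$. Applying the substitution $y_{ij} \mapsto x_{r_i, c_j}$ to both sides, the left-hand side becomes $\Imm_v X(R, C)$ by definition, so it remains to check that the right-hand side becomes $(-1)^{\ell(v)} \det(X(R,C)\res{\gr{v, w_0}})$.

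For that last point I would observe that substitution commutes with forming the restriction to $P := \gr{v, w_0}$ and with taking the determinant. The restriction $Y\res{P}$ has $(i,j)$-entry $y_{ij}$ if $(i,j) \in P$ and $0$ otherwise; applying the substitution entrywise yields a matrix whose $(i,j)$-entry is $x_{r_i, c_j}$ if $(i,j) \in P$ and $0$ otherwise, which is exactly $X(R,C)\res{P}$ by \cref{defn:RC} and the definition of restriction. Since the determinant is a polynomial in the matrix entries, the substitution passes through $\det$, giving $\det(Y\res{P}) \mapsto \det(X(R,C)\res{P})$. Combining, $\Imm_v X(R,C) = (-1)^{\ell(v)} \det(X(R,C)\res{\gr{v,w_0}})$, which is \eqref{eq:dualCanonicalDet}.

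There is essentially no obstacle here: the lemma is a formal consequence of \cref{prop:immDet} together with the observation that the operations involved (entrywise substitution, zeroing out entries outside $P$, and taking a determinant) all commute with the specialization $y_{ij} \mapsto x_{r_i, c_j}$. The only mild subtlety worth spelling out is that \cref{prop:immDet} is an identity in the polynomial ring $\CC[y_{ij}]$, so it is legitimate to apply the $\CC$-algebra homomorphism to $\CC[x_{ij}]$ determined by the row and column labels $R, C$; everything then follows by naturality.
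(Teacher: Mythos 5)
Your proof is correct and takes the same route as the paper, which simply notes that \cref{lem:dualCanonicalDet} follows immediately from \cref{prop:immDet} by the specialization $y_{ij}\mapsto x_{r_i,c_j}$. Your write-up just makes explicit the (routine) fact that this substitution commutes with restriction to $\gr{v,w_0}$ and with taking determinants.
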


We are interested in the sign of $\Imm_v X(R, C)$ on $k$-positive matrices, so long as $\Imm_v X(R, C)$ is not identically zero. Clearly, the function in \eqref{eq:dualCanonicalDet} is identically zero when the matrix $X(R, C) \res{\gr{v, w_0}}$ has two identical rows or columns. We make the following definitions to discuss this situation.

\begin{defn} 
Let $P \subseteq [n]^2$. The \emph{support} of row $r$ of $P$ is the set of columns $c \in [n]$ such that $(r, c)\in P$. The support of a column is defined analogously.
\end{defn}

\begin{defn}
 Let $P \subseteq [n]^2$, and let $R, C \in \multichoose{[m]}{n}$. Then $P$ is \emph{$(R,C)$-admissible} if no two rows or columns with the same labels have the same support.
\end{defn}

\begin{ex}
Let $P=\gr{v,w_0}$ where $v=2413$, as in Example~\ref{ex:graph}. Rows 1 and 2 have support $\{2, 3, 4\}$ and rows 3 and 4 have support $\{1, 2, 3\}$. Column 1 has support $\{3,4\}$, columns 2 and 3 have support $\{1, 2, 3, 4\}$, and column 4 has support $\{1, 2\}$. This means $P$ is $(R,C)$-admissible if and only if $r_1\neq r_2, r_3\neq r_4$, and $c_2\neq c_3$. For example, let $A=\{1,2,2,3\}$ and $B=\{1,2,3,3\}$. Then $P$ is $(A, B)$-admissible but, since $a_2=a_3=2$, $P$ is not $(A, A)$-admissible.
\end{ex}

For $v$ avoiding 1324 and 2143, $\Imm_v X(R, C)$ is identically zero if $\gr{v, w_0}$ is not $(R, C)$-admissible. In the subsequent sections, we will show the converse holds as well (see \cref{thm:detSign}).

Finally, we introduce some notation that will be useful in proofs.

For $I \in \binom{[n]}{k}$, define $\delta_I:[n] \setminus I \to [n-k]$ as
\[ 
\delta_I(j):= j-|\{i \in I: i<j\}|
\]
That is, $\delta_I$ is the unique order-preserving map from $[n] \setminus I$ to $[n-k]$.

\begin{defn}
For $I, J \in \binom{[n]}{k}$ and $P \subseteq [n]^2$, let $P^J_I\subseteq[n-k]\times [n-k]$ be $P$ with rows $I$ and columns $J$ deleted. That is, $P^J_I=\{(\delta_I(a), \delta_J(b)): (a, b) \in P\}$. The labels of rows and columns are preserved under deletions; to be more precise, if $R=\{r_1 \leq \cdots \leq r_n\}$ is the multiset of row labels of $P$, the multiset of row labels of $P^J_I$ is $\{r'_1 \leq \cdots \leq  r'_{n-k}\}$ where $r'_j= r_{\delta_I^{-1}(j)}$.
\end{defn}

\subsection{Combinatorics of graphs of upper intervals}

We will now take a closer look at the graphs $\gr{v,w_0}$ that appear in \Cref{lem:dualCanonicalDet}. We begin by giving an alternate definition for $\gr{v,w_0}$.

\begin{defn}
Let $v \in S_n$ and $(i, j) \in [n]^2 \setminus \gro{v}$. Then $(i, j)$ is \emph{sandwiched} by a non-inversion $\lrangle{k, l}$ if $k \leq i \leq l$ and $v_k \leq j \leq v_l$. We also say $\lrangle{k, l}$ \emph{sandwiches} $(i, j)$.
\end{defn}

In other words, $(i, j)$ is sandwiched by $\lrangle{k, l}$ if and only if $(i, j)\in[n]^2$ lies inside the rectangle with diagonal corners $(k, v_k)$ and $(l, v_l)$.

\begin{lem}[\protect{\cite[Lemma 3.4]{CSB}}] \label{lem:graphCharacterization}
Let $v\in S_n$. Then $\gr{v,w_0}=\gro{v} \cup \{(i, j): (i, j)$  is sandwiched by a non-inversion of $v\}$.
\end{lem}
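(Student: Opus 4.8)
The plan is to first reinterpret the statement. Since $w_0$ is the maximum of Bruhat order, $[v,w_0]=\{u\in S_n:u\geq v\}$, so $\gr{v,w_0}=\{(i,j):u_i=j\text{ for some }u\geq v\}$, and the claim becomes: there is $u\geq v$ with $u_i=j$ if and only if $v_i=j$ or $(i,j)$ is sandwiched by a non-inversion of $v$. I would record one combinatorial reformulation at the outset: for $(i,j)$ with $v_i\neq j$, the point $(i,j)$ is sandwiched by a non-inversion of $v$ if and only if $\gro{v}$ meets the northwest rectangle $[1,i]\times[1,j]$ and also meets the southeast rectangle $[i,n]\times[j,n]$. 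The forward implication is immediate from the two endpoints $(k,v_k)$, $(l,v_l)$ of a sandwiching non-inversion $\lrangle{k,l}$; conversely, a point $(p,v_p)$ in the NW rectangle and a point $(q,v_q)$ in the SE rectangle satisfy $p\leq i\leq q$ and $v_p\leq j\leq v_q$, must be distinct (a common point would force $v_i=j$), hence $p<q$ and $v_p<v_q$, so $\lrangle{p,q}$ is a non-inversion sandwiching $(i,j)$.

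For the inclusion $\subseteq$ I would use the entrywise (``rank'') characterization of Bruhat order, see \cite{BB}: $v\leq u$ if and only if $|\{a\leq i:v_a\leq j\}|\geq|\{a\leq i:u_a\leq j\}|$ for all $i,j$, equivalently if and only if $|\{a\geq i:v_a\geq j\}|\geq|\{a\geq i:u_a\geq j\}|$ for all $i,j$. Trivially $\gro{v}\subseteq\gr{v,w_0}$. If $(i,j)\in\gr{v,w_0}$ with $v_i\neq j$, choose $u\geq v$ with $u_i=j$; then $u$ has a point in the NW rectangle of $(i,j)$, so by the first inequality $v$ does too, and likewise from the second inequality $v$ has a point in the SE rectangle, so $(i,j)$ is sandwiched by the reformulation above.

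The substance is the inclusion $\supseteq$: given a non-inversion $\lrangle{k,l}$ of $v$ sandwiching $(i,j)$, with $v_i\neq j$, I must produce $u\geq v$ with $u_i=j$. I would do this by an explicit construction using at most two transpositions, relying only on the basic fact (see \cite{BB}) that exchanging the entries in positions $a<b$ of a permutation produces a strictly larger Bruhat element whenever those entries are in increasing order. Let $m$ be the position with $v_m=j$; note $m\neq i$. If $m>i$ and $v_i<j$, then $\lrangle{i,m}$ is a non-inversion of $v$ and exchanging positions $i,m$ yields a valid $u$; the case $m<i$, $v_i>j$ is symmetric. In the remaining case $m<i$ and $v_i<j$, the right endpoint of $\lrangle{k,l}$ supplies $l>i$ with $v_l\geq j$ (if $l=i$ then $v_l=v_i<j$, a contradiction); exchanging positions $i,l$ is a non-inversion swap (as $v_i<j\leq v_l$) giving $v'\geq v$ with $v'_i=v_l$ and $v'_m=j$, and moreover $v_l>j$ (else $l=m$), so $\lrangle{m,i}$ is a non-inversion of $v'$ and exchanging positions $m,i$ yields $u\geq v'\geq v$ with $u_i=j$; the case $m>i$, $v_i>j$ is the mirror image, using the left endpoint $k<i$ with $v_k\leq j$. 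Carrying out and bookkeeping these four cases — in particular verifying that each exchange is a non-inversion at the step it is applied, which in the two-step cases needs $m\neq k,l$ — is routine.

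I expect the only genuine obstacle to be this last case analysis: it is elementary, but one must keep straight the four configurations of $m$ relative to $i$ and of $v_i$ relative to $j$ and confirm that each transposition increases length. (Alternatively one could phrase $\supseteq$ as a feasibility question for the entrywise Bruhat criterion and invoke a transportation/flow argument to route some $u\geq v$ through $(i,j)$, but the two-transposition construction is more self-contained and keeps the argument within elementary permutation combinatorics.)
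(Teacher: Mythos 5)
Your proof is correct, and it is essentially the standard argument: the paper itself gives no proof here (it defers to \cite[Lemma 3.4]{CSB}), and the argument there likewise combines the rank-matrix (``dot'') characterization of Bruhat order for the inclusion $\subseteq$ with an explicit construction of some $u\geq v$ passing through a sandwiched point for $\supseteq$. Your reformulation of ``sandwiched'' as ``$\gro{v}$ meets both the NW and SE rectangles of $(i,j)$'' is accurate, the monotonicity of the rank counts under Bruhat order is applied correctly in both the $\{a\leq i\}$ and $\{a\geq i\}$ forms, and the four-case two-transposition construction checks out (in particular, in the two-step cases the first swap leaves position $m$ untouched and the strict inequalities $v_l>j$, resp.\ $v_k<j$, needed for the second swap do follow from $l\neq m$, resp.\ $k\neq m$). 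No gaps.
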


Using this alternate characterization, one can translate the assumptions of \cref{thm:main} into a condition on $\gr{v, w_0}$.

\begin{lem}[\protect{\cite[Lemma 4.1]{CSB}}]\label{lem:sq-inversions}
Let $v\in S_n$. The graph $\gr{v,w_0}$ has a square of size $k+1$ if and only if for some non-inversion $\lrangle{i, j}$ of $v$, we have $j-i \geq k$ and $v_j-v_i \geq k$.
\end{lem}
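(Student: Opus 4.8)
The plan is to use the alternate characterization of $\gr{v,w_0}$ from \cref{lem:graphCharacterization}, which describes the graph as $\gro{v}$ together with all cells sandwiched by a non-inversion of $v$. The two directions are quite different in flavor, so I would handle them separately.

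For the ``if'' direction, suppose $\lrangle{i,j}$ is a non-inversion of $v$ with $j-i \geq k$ and $v_j - v_i \geq k$. By definition, every cell $(a,b) \in [n]^2$ with $i \leq a \leq j$ and $v_i \leq b \leq v_j$ is sandwiched by $\lrangle{i,j}$, hence lies in $\gr{v,w_0}$ by \cref{lem:graphCharacterization}. This rectangle of cells has height $j - i + 1 \geq k+1$ and width $v_j - v_i + 1 \geq k+1$, so it contains a $(k+1)\times(k+1)$ square region, e.g.\ the cells $\{(a,b) : i \leq a \leq i+k,\ v_i \leq b \leq v_i + k\}$. This gives a square of size $k+1$ in $\gr{v,w_0}$.

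For the ``only if'' direction, suppose $\gr{v,w_0}$ contains a square region $S$ of size $k+1$, say $S = \{(a,b) : p \leq a \leq p+k,\ q \leq b \leq q+k\}$. I want to produce a single non-inversion $\lrangle{i,j}$ that sandwiches a rectangle containing (a translate of) $S$, with $j - i \geq k$ and $v_j - v_i \geq k$. The natural candidates for $i$ and $j$ are the smallest and largest rows of $S$, i.e.\ $i = p$ and $j = p+k$ — but the subtlety is that the cells of $\gr{v,w_0}$ in those rows need not come from the graph of $v$ itself; they are only known to each be sandwiched by \emph{some} non-inversion, and a priori different cells could use different non-inversions. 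The key step is therefore to argue that one can find a non-inversion of $v$ whose ``bounding rectangle'' (with corners $(i,v_i)$ and $(j,v_j)$) is large enough in both dimensions. I expect the cleanest way is to look at rows $p$ and $p+k$: since row $p$ of $\gr{v,w_0}$ has an entry in column $q$, there is a non-inversion $\lrangle{k_1, l_1}$ with $k_1 \le p \le l_1$ and $v_{k_1} \le q \le v_{l_1}$; similarly for the cell $(p+k, q+k)$ there is a non-inversion $\lrangle{k_2, l_2}$ with $k_2 \le p+k \le l_2$ and $v_{k_2} \le q+k \le v_{l_2}$. One then wants to ``splice'' these: consider the non-inversion $\lrangle{k_1, l_2}$ (note $k_1 \le p < p+k \le l_2$, and one must check $v_{k_1} < v_{l_2}$ to confirm it is a non-inversion, which follows since $v_{k_1} \le q < q+k \le v_{l_2}$). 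Then $l_2 - k_1 \geq (p+k) - p = k$ and $v_{l_2} - v_{k_1} \geq (q+k) - q = k$, as desired, and this non-inversion witnesses the condition.

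The main obstacle is the bookkeeping in the ``only if'' direction — verifying that the spliced pair $\lrangle{k_1, l_2}$ is genuinely a non-inversion and that its bounding rectangle has both side lengths at least $k$. Everything else follows directly from unwinding \cref{lem:graphCharacterization}. It is worth double-checking the edge cases where $p$, $p+k$, $q$, or $q+k$ coincide with positions in $\gro{v}$ itself (where the ``non-inversion'' sandwiching the cell could be taken to be the trivial one $\lrangle{a,a}$ only if $v_a$ equals the relevant column), but the splicing argument above is robust to this since it only uses the existence of \emph{some} sandwiching non-inversion for the two corner cells of $S$.
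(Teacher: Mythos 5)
Your argument is correct and is essentially the proof given in the cited source \cite{CSB}: the ``if'' direction is immediate from \cref{lem:graphCharacterization}, and the ``only if'' direction works exactly by taking a sandwiching non-inversion (or graph point) for each of the two opposite corners $(p,q)$ and $(p+k,q+k)$ of the square and splicing them into the single non-inversion $\lrangle{k_1,l_2}$, whose required inequalities $l_2-k_1\geq k$ and $v_{l_2}-v_{k_1}\geq k$ follow just as you say. The edge cases you flag are handled correctly since all you need from each corner is one index on the appropriate side, which the corner itself supplies when it lies in $\gro{v}$.
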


We now introduce some notation and a proposition that we will need to prove our main result.

\begin{defn}\label{defn:bounding-box-anti}
Let $v \in S_n$.  Define $\bbox{i, v_i}$ to be the square region of $[n]^2$ with corners $(i,v_i),\ (i, n-i+1),\ (n-v_i+1, v_i)$ and $(n-v_i+1,n-i+1)$.  In other words, $\bbox{i, v_i}$ is the square region of $[n]^2$ with one corner at $(i,v_i)$ and two corners on the antidiagonal of $[n]^2$. We say $\bbox{i, v_i}$ is a \emph{bounding box} of $\gr{v,w_0}$ if there does not exist some $j$ such that $\bbox{i,v_i}\subsetneq \bbox{j,v_j}$. If $\bbox{i, v_i}$ is a bounding box of $\gr{v,w_0}$, we call $(i, v_i)$ a \emph{spanning corner} of $\gr{v,w_0}$. %We denote the set of spanning corners of $\gr{v,w_0}$ by $S$. 
(See \cref{fig:boundingBoxEx} for an example.)
\end{defn}

\begin{figure}
    \centering
    \includegraphics[height=0.35\textheight]{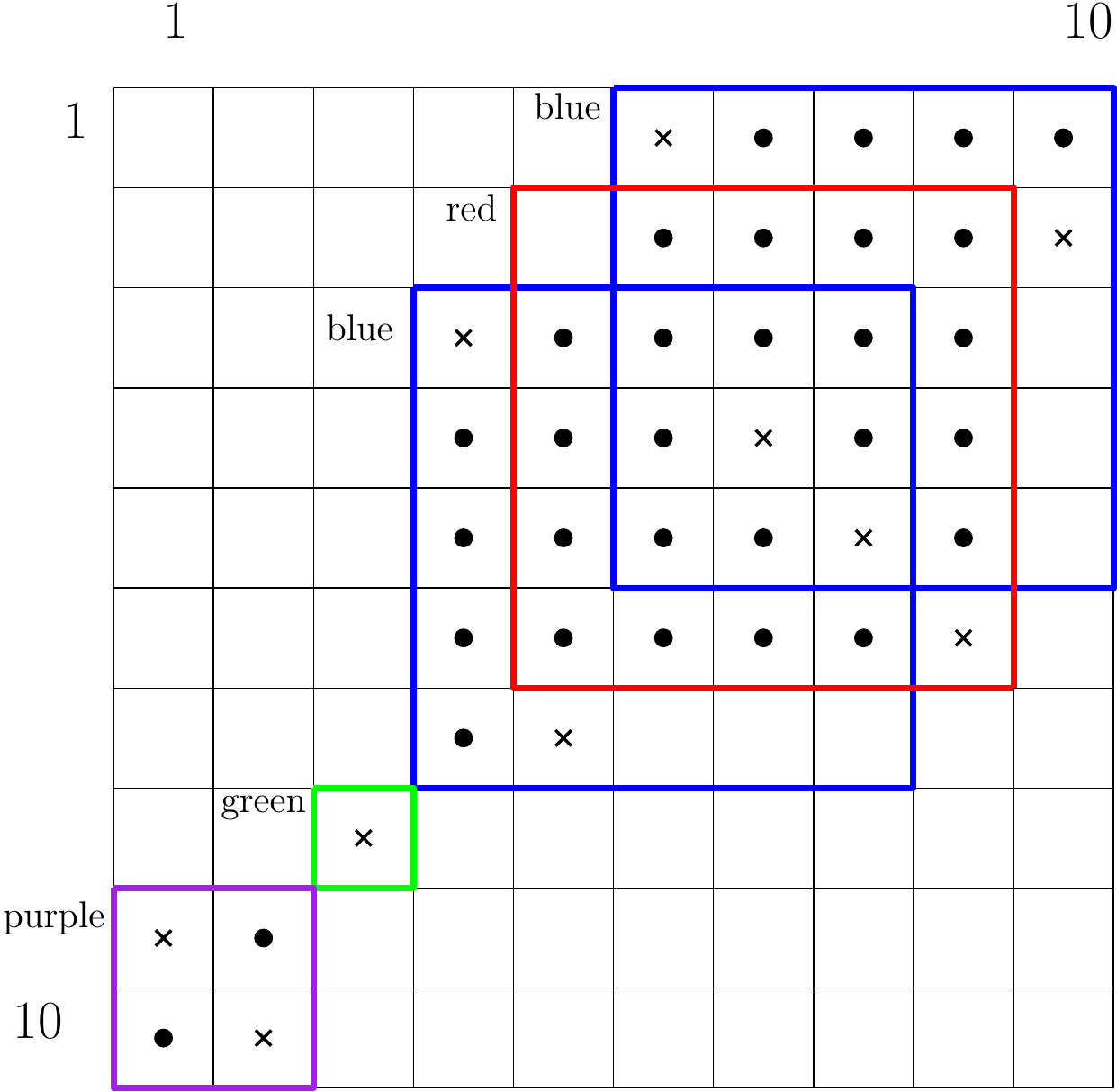}
    \caption{An example of $\gr{v,w_0}$, with $v=6~10~4~7~8~9~3~1~2$. The bounding boxes are blue, red, blue, green, and purple, listed in the order of their northmost row. The spanning corners of $\gr{v, w_0}$ are $(1, 6)$, $(3, 4)$, $(6, 9)$, $(8, 3)$, $(9, 1)$, and $(10, 2)$.}
    \label{fig:boundingBoxEx}
\end{figure}

The name ``bounding boxes" comes from the following lemma.

\begin{lem}[\protect{\cite[Lemma 4.12]{CSB}}] \label{lem:boundingboxes}
Let $v \in S_n$. Then
\[ \gr{v,w_0} \subseteq \bigcup_{(i, v_i) \in S} \bbox{i, v_i}.
\]
\end{lem}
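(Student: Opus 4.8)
The plan is to show that an arbitrary point $(a,b) \in \gr{v,w_0}$ lies in some bounding box. By \Cref{lem:graphCharacterization}, either $(a,b) \in \gro{v}$, i.e. $b = v_a$, or $(a,b)$ is sandwiched by a non-inversion $\lrangle{i,j}$ of $v$. In the first case, $(a,b) = (a,v_a) \in \bbox{a,v_a}$, since $(a,v_a)$ is by definition a corner of its own box $\bbox{a,v_a}$; and $\bbox{a,v_a}$ is contained in some bounding box (either it is one, or it is properly contained in a larger $\bbox{j,v_j}$, and since the poset of boxes $\bbox{i,v_i}$ under inclusion is finite, iterating reaches a maximal one). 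So the substance is the sandwiched case.

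So suppose $(a,b)$ is sandwiched by the non-inversion $\lrangle{i,j}$: we have $i \le a \le j$ and $v_i \le b \le v_j$. I claim $(a,b) \in \bbox{i,v_i} \cup \bbox{j,v_j}$. Recall $\bbox{i,v_i}$ is the square with one corner $(i,v_i)$ and the opposite corner on the antidiagonal, namely at $(n-v_i+1, n-i+1)$; concretely, writing $s = \min(n-i+1-i,\ n-v_i+1-v_i)$ for its side length minus one (oriented appropriately), $\bbox{i,v_i}$ consists of the points $(i + x, v_i + y)$ with $0 \le x,y$ and $\max(x,y) \le s$ — i.e. the square hanging off $(i,v_i)$ toward the antidiagonal. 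Set $x = a-i \ge 0$ and $y = b - v_i \ge 0$. We need $\max(a-i,\ b-v_i) \le s$, or else the analogous inequality for the box at $(j,v_j)$ (which hangs off $(j,v_j)$ toward the antidiagonal in the opposite direction, covering points with smaller row/column indices). The key geometric observation is that the segment from $(i,v_i)$ to $(j,v_j)$ crosses the antidiagonal (since $i \le j$ but $v_i \le v_j$ — wait, both coordinates increase, so the segment goes from the upper-left region toward the lower-right, and $(a,b)$ sits in the rectangle they span). I would argue: if $(a,b)$ is on the $(i,v_i)$ side of the antidiagonal, i.e. $a + b \le n+1$, then both $a - i \le n+1 - v_i - i \le n - i + 1 - i$ and $b - v_i \le n + 1 - a - v_i \le \cdots$, establishing membership in $\bbox{i,v_i}$; symmetrically, if $a+b \ge n+1$, then $(a,b)$ lies in $\bbox{j,v_j}$, using $j - a \ge 0$, $v_j - b \ge 0$, and $a + b \ge n+1$ to bound $j-a$ and $v_j-b$ by the side length of $\bbox{j,v_j}$. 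Finally, $\bbox{i,v_i}$ and $\bbox{j,v_j}$ are each contained in bounding boxes by the same finiteness argument as above, which completes the proof.

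The main obstacle I anticipate is getting the inequalities for the sandwiched case exactly right — in particular, being careful that the square $\bbox{i,v_i}$ as defined really does "hang off" $(i,v_i)$ in the direction of the antidiagonal and not away from it, and that the antidiagonal condition $a+b \le n+1$ versus $a + b \ge n+1$ cleanly divides the sandwiching rectangle between the two boxes $\bbox{i,v_i}$ and $\bbox{j,v_j}$. One subtlety: one must check that $(i,v_i)$ and $(j,v_j)$ really do lie on opposite sides of (or on) the antidiagonal when $\lrangle{i,j}$ is a non-inversion — but this need not hold in general, so instead the argument should directly bound $a-i, b-v_i$ (resp. $j-a, v_j-b$) against the side lengths using the sandwiching inequalities together with whichever of $a+b \le n+1$ or $a+b\ge n+1$ holds, without reference to where the corners themselves sit. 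I would present the bound $a - i \le n - 2i + 1$: from $b \ge v_i$ and the hypothesis $a + b \le n+1$ we get $a \le n + 1 - b \le n + 1 - v_i$, and separately $a \le j \le n$; combining with $i \le v_i$ appropriately yields $a - i \le (n+1-v_i) - i$ and $a-i \le $ the column side length — this is the routine bookkeeping I'd fill in but not belabor here.
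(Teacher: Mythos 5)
Your proposal is correct in substance; note that the paper does not prove this lemma itself but imports it from \cite{CSB}, so there is no internal proof to compare against. Your reduction is the right one: points of $\gro{v}$ are corners of their own boxes, every box $\bbox{i,v_i}$ is contained in some bounding box by finiteness of the inclusion poset, and for a point $(a,b)$ sandwiched by $\lrangle{i,j}$ the dichotomy $a+b\leq n+1$ versus $a+b\geq n+1$ does cleanly place $(a,b)$ in $\bbox{i,v_i}$ or $\bbox{j,v_j}$ respectively. The worry you flag at the end resolves itself: if $a+b\leq n+1$, then the sandwiching inequalities $a\geq i$, $b\geq v_i$ force $i+v_i\leq a+b\leq n+1$, so $(i,v_i)$ is automatically weakly above the antidiagonal and $\bbox{i,v_i}$ really is the square with row range $[i,\,n-v_i+1]$ and column range $[v_i,\,n-i+1]$; membership then follows from exactly the two inequalities you write, $a\leq n+1-b\leq n+1-v_i$ and $b\leq n+1-a\leq n+1-i$ (and symmetrically for $\bbox{j,v_j}$ when $a+b\geq n+1$). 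Two slips in your bookkeeping asides: the side length of $\bbox{i,v_i}$ is $|n+1-i-v_i|+1$, not $\min(n+1-2i,\,n+1-2v_i)+1$, and the target inequality in your last sentence should be $a-i\leq n+1-v_i-i$ (equivalently $a\leq n-v_i+1$), not $a-i\leq n-2i+1$. Neither affects the argument, since the correct membership conditions are the ones you actually derive from the corner coordinates.
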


We also color the bounding boxes.

\begin{defn}
 A bounding box $\bbox{i,v_i}$ is said to be \emph{red} if $(i,v_i)$ is below the antidiagonal, \emph{green} if $(i,v_i)$ is on the antidiagonal, and \emph{blue} if $(i,v_i)$ is above the antidiagonal.  If $\bbox{i,v_i}$ and $\bbox{n-v_i+1,n-i+1}$ are both bounding boxes, then $\bbox{i,v_i}=\bbox{n-v_i+1,n-i+1}$ is both red and blue.  We say such a box is \emph{purple}. (See \cref{fig:boundingBoxEx} for an example.)
\end{defn}

\begin{prop}[\protect{\cite[Proposition 4.14]{CSB}}] \label{prop:alternatingBoxesAnti}
Suppose $v \in S_n$ avoids $2143$ and $w_0v$ is not contained in a maximal parabolic subgroup of $S_n$. Order the bounding boxes of $\gr{v,w_0}$ by the row of the northwest corner. If $\gr{v,w_0}$ has more than one bounding box, then they alternate between blue and red and there are no purple bounding boxes.
\end{prop}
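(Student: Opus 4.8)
The plan is to translate the two hypotheses into combinatorics, reduce the proposition to a single assertion about consecutive bounding boxes lying on one side of the antidiagonal, and then derive a contradiction from a hypothetical bad pair.

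Since $(w_0v)(i)=n+1-v_i$, the permutation $w_0v$ lies in the maximal parabolic subgroup $S_{\{1,\dots,j\}}\times S_{\{j+1,\dots,n\}}$ if and only if $\{v_1,\dots,v_j\}=\{n-j+1,\dots,n\}$; thus the hypothesis on $w_0v$ says that for no $j$ does $\gro{v}$ split into a block occupying rows $\{1,\dots,j\}$ and columns $\{n-j+1,\dots,n\}$ together with its complement. Conjugation $v\mapsto w_0vw_0$ rotates $[n]^2$ by $180^\circ$, carries $\gr{v,w_0}$ to $\gr{w_0vw_0,w_0}$, exchanges the two sides of the antidiagonal (hence blue and red boxes) while fixing the purple boxes, and --- since $w_0(2143)w_0=2143$ --- preserves both $2143$-avoidance and the parabolic hypothesis. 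A purple box has a spanning corner strictly on each side of the antidiagonal, so a purple box together with any neighbour of it (in the northwest-corner order) produces two consecutive boxes with spanning corners strictly on one fixed side; using this, the $180^\circ$ symmetry, and the absence of green boxes (established below), the whole proposition follows from the single claim: \emph{if $v$ avoids $2143$ and $w_0v$ lies in no maximal parabolic, then $\gr{v,w_0}$ has no two bounding boxes, consecutive in the northwest-corner order, whose spanning corners both lie strictly above the antidiagonal.}

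Next I would record some structure of bounding boxes, directly from \Cref{defn:bounding-box-anti}. For corners weakly above the antidiagonal, $\bbox{i,v_i}\subseteq\bbox{j,v_j}$ iff $j\le i$ and $v_j\le v_i$; and a blue box $\bbox{i,v_i}$ is contained in a red box $\bbox{j,v_j}$ iff $i+v_j\ge n+1$ and $j+v_i\ge n+1$. So maximality forces the spanning corner $(i,v_i)$ of a blue box to be a left-to-right minimum of $v$ (i.e.\ $v_h>v_i$ for $h<i$) and to make $\gro{v}$ meet the quadrant $\{(s,c): s\ge n+1-v_i,\ c\ge n+1-i\}$ in at most the point $(n+1-v_i,n+1-i)$ --- in nothing at all unless $\bbox{i,v_i}$ is purple --- and symmetrically for red boxes. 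The left-to-right-minimum property forces a green box at $(i,n+1-i)$ to have $\{v_1,\dots,v_{i-1}\}=\{n-i+2,\dots,n\}$ (or $v_1=n$ if $i=1$), a block decomposition ruled out by the hypothesis; hence \textbf{there are no green boxes}. Finally, $w_0\in[v,w_0]$ gives $\gro{w_0}\subseteq\gr{v,w_0}$, so the whole antidiagonal is covered by $\gr{v,w_0}$ and hence (by \Cref{lem:boundingboxes}) by the bounding boxes. Since a bounding box is determined by the interval of antidiagonal cells it contains, distinct bounding boxes have incomparable such intervals; these intervals therefore form an antichain covering the antidiagonal, on which ordering by left endpoint, by right endpoint, and by northwest-corner row all agree, and consecutive intervals overlap or abut. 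In particular, two consecutive blue boxes $\bbox{p,v_p},\bbox{q,v_q}$ must satisfy $p<q$, $v_p>v_q$ and $q\le n+2-v_p$.

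It remains to contradict the existence of such a pair. Every point of $\gro{v}$ lies in some bounding box; a point $(r,v_r)$ with $p<r<q$ lies in $\bbox{p,v_p}$ precisely when $v_p\le v_r\le n+1-p$ and never lies in $\bbox{q,v_q}$, so any point of $\gro{v}$ in rows $p+1,\dots,q-1$ whose value lies outside $[v_p,n+1-p]$ is forced into a third bounding box $B'$. One then analyzes $B'$: a blue $B'$ whose spanning corner lies in a row strictly between $p$ and $q$ has its antidiagonal interval squeezed strictly between those of $\bbox{p,v_p}$ and $\bbox{q,v_q}$, contradicting consecutiveness; while in the remaining cases --- a blue $B'$ whose corner sits in an earlier row (a ``large value before $p$''), a red $B'$, or the absence of any such forced point --- one combines the corners of $B'$, $\bbox{p,v_p}$ and $\bbox{q,v_q}$ with the graph points of $v$ forced to exist by the two quadrant conditions, the no-gap constraint, and (where needed) the parabolic hypothesis, to produce either a bounding box strictly between the two given ones or four positions of $v$ in relative order $2143$. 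The main obstacle --- and the technical heart of the proof, as in~\cite{CSB} --- is exactly this case analysis: pinning down precisely which graph points of $v$ are forced to exist and verifying that in every configuration one obtains an intermediate box or an occurrence of $2143$. This is the only place the hypothesis that $v$ avoids $2143$ is used.
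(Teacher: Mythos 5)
Your preparatory work is correct and careful: the translation of the hypothesis on $w_0v$ into the forbidden block decompositions $\{v_1,\dots,v_j\}=\{n-j+1,\dots,n\}$, the containment criteria for the boxes $\bbox{i,v_i}$, the fact that spanning corners of blue boxes are left-to-right minima, the exclusion of green boxes, the identification of bounding boxes with an antichain of intervals covering the antidiagonal, and the reduction of the whole proposition (via the $180^{\circ}$ symmetry $v\mapsto w_0vw_0$ and the treatment of purple boxes) to the claim that no two consecutive bounding boxes each have a spanning corner strictly above the antidiagonal. Note that the paper itself does not prove this statement but quotes it as \cite[Proposition 4.14]{CSB}, so your attempt is being judged as a proof from scratch.

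As such, it is not complete: the reduced claim is precisely where the $2143$-avoidance hypothesis must enter, and you do not establish it. After setting up the configuration of consecutive blue boxes $\bbox{p,v_p}$, $\bbox{q,v_q}$ with $p<q$, $v_q<v_p$, $q\le n+2-v_p$, you write that "one combines the corners \dots to produce either a bounding box strictly between the two given ones or four positions of $v$ in relative order $2143$," and you explicitly label this case analysis "the technical heart of the proof" without carrying it out. That is a strategy statement, not an argument: which graph points are forced to exist (e.g.\ a pigeonhole count using the quadrant condition for $\bbox{p,v_p}$ forces some $j\in[p+1,\,n-v_p]$ with $v_j\ge n+1-p$), how the cases on the location of such points split, and why each case yields an intermediate maximal box or a $2143$ occurrence are all left unverified, and this is not routine. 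The gap is essential rather than cosmetic: everything you actually prove holds without assuming $2143$-avoidance, yet the proposition fails without it --- for $v=2143$ itself, $w_0v=3412$ lies in no maximal parabolic and $\gr{v,w_0}$ has two purple bounding boxes. You need to supply the case analysis before this can be accepted as a proof.
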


\section{Positivity of basis elements} \label{sec:proof}

In this section, we prove our main result.

\begin{thm}\label{thm:main-sq}
Let $R, C \in \multichoose{[m]}{n}$, let $v \in S_n$ be $1324$-, $2143$-avoiding and suppose that the largest square region in $\gr{v,w_0}$ has size at most $k$.  If $\gr{v, w_0}$ is not $(R, C)$-admissible, then $\Imm_v X(R,C)$ is identically zero. Otherwise, $\Imm_v X(R,C)$ is $k$-positive.
\end{thm}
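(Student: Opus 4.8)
The plan is first to invoke \cref{lem:dualCanonicalDet}, which reduces everything to the single quantity $D(M):=(-1)^{\ell(v)}\det\big(M(R,C)\res{\gr{v,w_0}}\big)$. If $\gr{v,w_0}$ is not $(R,C)$-admissible, then two rows of $\gr{v,w_0}$ with the same label share a support, or two columns do; since the $(i,j)$-entry of $M(R,C)$ depends only on $r_i$ and $c_j$, the corresponding two rows (or columns) of $M(R,C)\res{\gr{v,w_0}}$ coincide, so $D\equiv 0$. The remaining assertion is that if $\gr{v,w_0}$ is $(R,C)$-admissible then $D(M)>0$ for every $k$-positive $M$. I would prove this by induction on $n$, carrying along the ``not admissible $\Rightarrow$ identically zero'' statement, so that in the inductive step I know exactly which of the smaller determinants that appear are allowed to vanish.

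\textbf{Reducing to the non-parabolic case.} If $w_0v$ lies in a maximal parabolic subgroup $S_{[a]}\times S_{[a+1,n]}$, equivalently $v([a])=[n-a+1,n]$, then every non-inversion of $v$ lies in $[a]$ or in $[a+1,n]$, so by \cref{lem:graphCharacterization} the region $\gr{v,w_0}$ is block anti-diagonal: its nonzero cells lie only in the $a\times a$ block on rows $[a]$, columns $[n-a+1,n]$, and the $(n-a)\times(n-a)$ block on rows $[a+1,n]$, columns $[n-a]$; after re-indexing these equal $\gr{v',w_0}$ and $\gr{v'',w_0}$ for the standardizations $v',v''$ of $v_1\cdots v_a$ and $v_{a+1}\cdots v_n$. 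Using $\ell(v)=\ell(v')+\ell(v'')+a(n-a)$, the sign coming from the block anti-diagonal structure cancels and $D$ factors as the product of the analogous quantities for $(v',\{r_1,\dots,r_a\},\{c_{n-a+1},\dots,c_n\})$ and $(v'',\{r_{a+1},\dots,r_n\},\{c_1,\dots,c_{n-a}\})$; the square-size hypothesis restricts to sub-regions, and since every row and column of $\gr{v,w_0}$ is nonempty, admissibility of $\gr{v,w_0}$ is equivalent to admissibility of both blocks. Both parts then follow by induction, so I may assume $w_0v$ is not in a maximal parabolic; by \cref{prop:alternatingBoxesAnti} the bounding boxes of $\gr{v,w_0}$ then alternate between blue and red, with none purple. (A separate, easier argument treats the case of a single bounding box, which occurs precisely when $v_1=1$ or $v_n=n$.)

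\textbf{The Desnanot--Jacobi step.} For the main case I would, following the template of \cite{CSB}, apply Lewis Carroll's identity to $A:=M(R,C)\res{\gr{v,w_0}}$ for a carefully chosen pair of rows $i_1<i_2$ and columns $j_1<j_2$ (dictated by the outermost bounding boxes and by the antidiagonal, whose crossings with $\gr{v,w_0}$ are pinned down by the blue/red alternation):
\[
\det A\cdot\det A^{j_1,j_2}_{i_1,i_2}=\det A^{j_1}_{i_1}\,\det A^{j_2}_{i_2}-\det A^{j_2}_{i_1}\,\det A^{j_1}_{i_2}.
\]
Each of the five regions obtained by deleting these rows and columns from $\gr{v,w_0}$ lies in the class of ``nice'' $0/1$-regions handled in \cite{CSB}, for which we know --- again by induction --- both the sign of the corresponding restricted determinant (a power of $-1$ determined by the region) and exactly when it vanishes; moreover the square-size bound and the labels are inherited. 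The new ingredient, absent in \cite{CSB} because there all labels are distinct, is that $i_1,i_2,j_1,j_2$ must be chosen so that the ``denominator'' region $(\gr{v,w_0})^{j_1,j_2}_{i_1,i_2}$ stays $(R,C)$-admissible, hence $\det A^{j_1,j_2}_{i_1,i_2}\neq 0$, and so that at least one product on the right-hand side is nonzero --- both of which can be arranged from $(R,C)$-admissibility of $\gr{v,w_0}$ by choosing the deleted rows and columns according to where repeated labels occur. A sign computation, using the alternation of the bounding boxes, then shows the two products on the right carry the same sign, so they reinforce rather than cancel; dividing by the nonzero denominator, whose sign is known, yields $D(M)>0$.

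\textbf{Expected main obstacle.} The crux is the bookkeeping in this last step: simultaneously (a) controlling the combinatorial type of all five deleted regions, (b) making a choice of deleted rows and columns that preserves $(R,C)$-admissibility where it is needed --- the genuinely new point relative to \cite{CSB}, and where the admissibility hypothesis is used --- and (c) verifying that the two Desnanot--Jacobi terms never cancel, i.e.\ that their signs agree. I expect (b) and (c), together with making the case analysis compatible with the vanishing of some sub-determinants, to be the main work; the parabolic reduction and the single-bounding-box base cases should be routine by comparison.
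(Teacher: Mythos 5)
Your plan follows the paper's proof essentially step for step: reduce to the sign of $(-1)^{\ell(v)}\det M(R,C)\res{\gr{v,w_0}}$ via \cref{lem:dualCanonicalDet}, induct on $n$ with the Young-diagram, complement-of-Young-diagram, and block-antidiagonal cases as the base cases (\cref{prop:ImmMultipleBlock} handles the parabolic reduction), and otherwise apply Lewis Carroll's identity to rows and columns determined by the last two bounding boxes, the genuinely new work being that the deletions preserve $(R,C)$-admissibility. One caution about the step you defer: in the paper the deleted rows and columns are \emph{not} chosen adaptively according to where repeated labels occur --- they are the same fixed choice as in \cite{CSB} (rows $a$ and $b=v^{-1}(1)$, columns $1$ and $d=v_a$, anchored at the corners of the last two bounding boxes), and one then verifies directly that for this fixed choice the specific term $\det A^1_b\cdot\det A^d_a$, with $A:=M(R,C)\res{\gr{v,w_0}}$, has both factors admissible; an adaptive choice would disturb the sign analysis of \cref{prop:sgn}, which depends on that anchoring. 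Also, you need not arrange admissibility of the denominator region separately: once the right-hand side is shown nonzero (its two terms carry the same sign and cannot cancel), nonvanishing of $\det A^{1,d}_{a,b}$ follows from the identity itself.
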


Theorem~\ref{thm:main} easily follows from Theorem~\ref{thm:main-sq}, using Lemma~\ref{lem:sq-inversions}.

Our proofs rely heavily on Lewis Carroll's identity.

\begin{prop}[Lewis Carroll's Identity]\label{prop:lc-id}
If $M$ is an $n\times n$ square matrix and $M_A^B$ is $M$ with the rows indexed by $A \subset [n]$ and columns indexed by $B \subset [n]$ removed, then 
$$\det(M)\det(M_{a,a'}^{b,b'})=\det(M_a^b)\det(M_{a'}^{b'})-\det(M_a^{b'})\det(M_{a'}^b),$$
where $1\leq a<a'\leq n$ and $1\leq b<b'\leq n$.
\end{prop}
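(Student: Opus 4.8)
The statement is the classical Desnanot--Jacobi (``Lewis Carroll'') identity, and the plan is to prove it by the standard Schur-complement argument after two reductions.

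\emph{First reduction (to corners).} I would first reduce to the case $a=b=1$, $a'=b'=n$. Permuting the rows of $M$ by any permutation carrying $a\mapsto 1$ and $a'\mapsto n$, and the columns by any permutation carrying $b\mapsto 1$ and $b'\mapsto n$, multiplies each of the six determinants $\det M$, $\det M_{a,a'}^{b,b'}$, $\det M_a^b$, $\det M_{a'}^{b'}$, $\det M_a^{b'}$, $\det M_{a'}^b$ by an explicit sign; a short computation using $\sgn(\pi)=(-1)^{i+\pi(i)}\sgn(\pi|_{[n]\setminus i})$ shows these signs assemble into a common factor $(-1)^{a+a'+b+b'}$ on the left-hand side and the same factor on the right-hand side, so the general identity is equivalent to
\[\det(M)\det(M_{1,n}^{1,n})=\det(M_1^1)\det(M_n^n)-\det(M_1^n)\det(M_n^1).\]

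\emph{Second reduction (to a dense set).} Both sides of this last identity are polynomials in the entries of $M$, and the non-vanishing locus of the not-identically-zero polynomial $\det(M_{1,n}^{1,n})$ is Zariski dense in $\Mat_{n\times n}(\CC)$; so it suffices to prove the identity when the ``interior'' block $B:=M_{1,n}^{1,n}$ is invertible.

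\emph{Main computation.} Assuming $B$ invertible, write
\[M=\begin{pmatrix}\alpha & r & \gamma\\ c & B & c'\\ \beta & s & \delta\end{pmatrix},\]
with scalars $\alpha,\beta,\gamma,\delta$, length-$(n-2)$ row vectors $r,s$ and column vectors $c,c'$, and let $S$ be the $2\times 2$ Schur complement of $B$, with entries $s_{11}=\alpha-rB^{-1}c$, $s_{12}=\gamma-rB^{-1}c'$, $s_{21}=\beta-sB^{-1}c$, $s_{22}=\delta-sB^{-1}c'$. The Schur complement identity gives $\det M=\det(B)\det(S)$. Deleting one boundary row and one boundary column and again taking the Schur complement of $B$ inside the resulting $(n-1)\times(n-1)$ matrix — where moving the surviving boundary row or column past $B$ contributes a factor $(-1)^n$ in exactly two of the four cases — yields
\[\det M_1^1=\det(B)\,s_{22},\qquad \det M_n^n=\det(B)\,s_{11},\]
\[\det M_1^n=(-1)^n\det(B)\,s_{21},\qquad \det M_n^1=(-1)^n\det(B)\,s_{12}.\]
Substituting these into both sides, the two $(-1)^n$ factors on the right cancel, and both sides become $(\det B)^2(s_{11}s_{22}-s_{12}s_{21})=(\det B)^2\det(S)$, which completes the proof. (An alternative would be to deduce the identity from Jacobi's theorem on the $2\times 2$ minors of the adjugate of $M$, again using a density argument for non-invertible $M$; the content there is the same sign-matching.) The only thing requiring care in either approach is the sign bookkeeping — in the first reduction, and in the two $(-1)^n$ factors above — but this is entirely routine, so I do not expect a genuine obstacle.
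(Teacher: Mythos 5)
Your proof is correct. Note that the paper offers no proof of this proposition at all: it is the classical Desnanot--Jacobi identity, stated as a known tool and used as a black box, so there is no in-paper argument to compare yours against. Your three steps all check out. In the corner reduction, with the order-preserving choice of row and column permutations each side of the identity indeed acquires the common factor $(-1)^{a+a'+b+b'}$ (and an arbitrary scrambling of the middle rows or columns only contributes squared signs that cancel term by term, so ``any permutation'' is fine as claimed). The density step is standard since both sides are polynomials in the entries. In the Schur-complement computation, conjugating the boundary row and column past $B$ costs the same sign on rows as on columns for $\det M$ (so none survives), and the four formulas $\det M_1^1=\det(B)s_{22}$, $\det M_n^n=\det(B)s_{11}$, $\det M_1^n=(-1)^n\det(B)s_{21}$, $\det M_n^1=(-1)^n\det(B)s_{12}$ are correct, after which both sides equal $(\det B)^2\det(S)$. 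The alternative you mention, deducing the identity from Jacobi's theorem on minors of the adjugate, would be equally acceptable; as written your argument is complete and self-contained.
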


\subsection{Young diagram case}\label{sec:young}

We first consider the case where $\gr{v, w_0}$ is a Young diagram or the complement of a Young diagram (using English notation). Recall that the \emph{Durfee square} of a Young diagram $\lambda$ is the largest square contained in $\lambda$.

\begin{prop} \label{prop:partitiondet}
Let $\lambda \subseteq n^n$ be a Young diagram with Durfee square of size at most $k$ and $\mu:=n^n/\lambda$. Let $M$ be a $m\times m$ $k$-positive matrix and $R,C\in \multichoose{[m]}{n}$. Then 

\[(-1)^{|\mu|}\det M(R,C)\res{\lambda} \geq 0
\]
and equality holds only if $(n, n-1, \dots, 1) \nsubseteq \lambda$ or if $\lambda$ is not $(R,C)$-admissible.
\end{prop}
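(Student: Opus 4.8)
The plan is to induct on $n$, using Lewis Carroll's identity (\cref{prop:lc-id}) to reduce the size of the matrix, as in the analogous argument of \cite{CSB}. The base cases are small $n$ (say $n \leq k$), where every minor of $M(R,C)\res{\lambda}$ that could appear is a minor of $M$ of size at most $k$, hence positive (when $\lambda$ is a full square and $(R,C)$-admissible) or the determinant vanishes because of a repeated row/column or a zero row/column of $\lambda$; one checks the sign $(-1)^{|\mu|}$ works out. For the inductive step, the key observation is that $\lambda \subseteq n^n$ is a Young diagram in English notation, so its first row and first column are ``full'' at the top-left; more importantly, the extreme corners of $\lambda$ interact nicely with deleting a row and a column. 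Concretely, I would pick $a=1$, $a'$ the last row of $\lambda$ that is nonempty (or $a'=n$), and similarly $b,b'$ from the columns, chosen so that each of the four matrices $M(R,C)\res{\lambda}{}^{b}_{a}$, $M(R,C)\res{\lambda}{}^{b'}_{a'}$, $M(R,C)\res{\lambda}{}^{b'}_{a}$, $M(R,C)\res{\lambda}{}^{b}_{a'}$, and $M(R,C)\res{\lambda}{}_{a,a'}^{b,b'}$ is again (up to reordering rows/columns and up to a sign) the restriction of an $(m\times m)$ $k$-positive-matrix construction $M'(R',C')$ to a Young diagram or co-Young diagram whose Durfee square still has size $\leq k$. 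Then Lewis Carroll's identity expresses $\det M(R,C)\res{\lambda}$ in terms of these smaller determinants, and the inductive hypothesis gives each of them a definite sign; the product structure $\det(M_a^b)\det(M_{a'}^{b'}) - \det(M_a^{b'})\det(M_{a'}^b)$ is then a sum of two terms with the same sign, yielding the claimed inequality, and strictness follows because at least one of the two products is strictly positive precisely when the admissibility and staircase-containment hypotheses hold.

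The main technical content is the bookkeeping: one must verify that deleting the chosen rows and columns from $M(R,C)\res{\lambda}$ again produces a restriction of the form $N(R',C')\res{\lambda'}$ where $N$ is $k$-positive (here $N$ will just be $M$ with appropriate rows/columns deleted, which is still $k$-positive since minors of $N$ of size $\leq k$ are minors of $M$), $R',C'$ are the induced label multisets, and $\lambda'$ is a Young diagram or its complement. This is where I would need the geometry of $\gr{v,w_0}$ being a Young-diagram shape: the boundary of $\lambda$ (or $\mu$) has at most one ``outer corner'' per row/column, so deleting the first row and last nonempty row keeps the shape a (skew-free) Young diagram, and the size of the Durfee square can only decrease or stay the same. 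The sign $(-1)^{|\mu|}$ must be tracked through: deleting a row and a column of $\lambda$ removes some boxes from both $\lambda$ and from the ambient $n^n$, changing $|\mu|$ in a controlled way, and one checks the signs in Lewis Carroll's identity combine correctly. I would also need to handle the co-Young-diagram case $\mu = n^n/\lambda$ in parallel (or reduce one to the other by transposing/rotating, noting $k$-positivity and Young-diagram-ness are preserved under the relevant symmetries).

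The hard part will be isolating exactly which rows $a<a'$ and columns $b<b'$ to delete so that all five resulting shapes remain in the inductive class \emph{and} so that the strictness claim can be read off. In degenerate situations --- when $\lambda$ has an empty row or column, or when $\lambda$ fails to contain the staircase $(n, n-1, \dots, 1)$, or when a repeated label in $R$ or $C$ forces two equal rows/columns --- the determinant should vanish, and I must confirm the induction correctly produces $0$ rather than a spurious sign; this means the choice of $a,a',b,b'$ has to be made carefully enough that a vanishing smaller determinant propagates. I expect this case analysis, rather than any single clever identity, to be the bulk of the work, and it closely parallels (but is more delicate than) the totally-positive case treated in \cite{CSB} precisely because $M(R,C)$ is only $k$-nonnegative, not $k$-positive.
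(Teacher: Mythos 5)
Your overall strategy (induction on $n$ via Lewis Carroll's identity, with the vanishing cases dispatched first) is the same as the paper's, but the step you yourself flag as ``the hard part'' --- which rows and columns to delete --- is where the argument actually lives, and your tentative choice does not work. You propose deleting the first row/column and the last nonempty row/column, i.e.\ rows $1,n$ and columns $1,n$. With that choice the cofactor $\det(A_{1,n}^{1,n})$ on the left-hand side can vanish even when all hypotheses of the proposition hold: for example $n=4$, $\lambda=(4,4,3,1)$, $R=(1,2,2,3)$. Here rows $2$ and $3$ have equal labels and supports $[1,4]$ and $[1,3]$ in $\lambda$, but after deleting columns $1$ and $4$ both supports become $\{2,3\}$, so $\lambda_{1,4}^{1,4}$ is not admissible for the induced labels and $\det(A_{1,4}^{1,4})=0$. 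The identity then reads $0=\text{RHS}$ and gives no information about $\det(A)$. The paper's choice is different and is engineered precisely to avoid this: it takes $a=\max\{i:\lambda_i=n\}$ (the last full row) paired with row $n$, and $b=\lambda_n$ (the last full column) paired with column $n$. One then checks that $\lambda_{a,n}^{b,n}$, $\lambda_a^n$, and $\lambda_n^b$ all still contain the relevant staircase \emph{and} remain admissible, so the left-hand cofactor and one entire product on the right-hand side are nonzero by induction.

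A second point your sketch leaves unresolved is how vanishing of the smaller determinants is controlled. Even with the correct choice, two of the five minors can be zero: $\lambda_a^b$ may fail to contain the staircase (e.g.\ $\lambda=(3,3,1)$) and $\lambda_n^n$ may fail admissibility (e.g.\ $R=(1,1,2)$, $C=(1,2,3)$, $\lambda=(3,2,1)$). The proof needs the fact that these two potentially-zero factors occur in the \emph{same} product $\det(A_a^b)\det(A_n^n)$, while the other product $\det(A_a^n)\det(A_n^b)$ is always nonzero, and that the two products carry opposite signs (namely $(-1)^{a+b+1}$ and $(-1)^{a+b}$ up to the common factor), so there is no cancellation and the right-hand side is nonzero with a determined sign. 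Your phrase ``at least one of the two products is strictly positive'' is the right intuition but is not attached to a specific term, and without that the strictness claim does not follow. Finally, a minor organizational note: the induction for the Young-diagram case never leaves the class of Young diagrams, so you do not need to handle co-Young diagrams inside this induction; the complement case is reduced to this one afterwards by antidiagonal transposition.
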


\begin{proof}
Let $A=M(R,C)\res{\lambda}=\{a_{ij}\}$. For $\sigma \in S_n$, let $a_{\sigma}:=a_{1, \sigma(1)}\cdots a_{n, \sigma(n)}$.  If $(n, n-1, \dots, 1) \nsubseteq \lambda$ then there is some $1\leq j\leq n$ where $\lambda_{n-j+1}<j$.  Thus boxes in $\lambda$ in the last $j$ rows are in the southwest most $j\times (j-1)$ rectangle.  This means that for every $\sigma$, $a_\sigma$ contains some zero entry, so $\det(A)=0$.  It's clear that if $\lambda$ is not $(R,C)$-admissable then $\det(A)=0$.

Now we will assume that $(n, n-1, \dots,1) \subseteq \lambda$ and that $\lambda$ is $(R,C)$-admissible. We proceed by induction on $n$ to show that $\det(A)$ has sign $(-1)^{|\mu|}$. The base cases for $n=1, 2$ are easy to check.

Let $a=\max\{i\ |\ \lambda_i=n\}$ and $b=\lambda_n=\max\{j\ |\ \lambda'_j=n\}$ where $\lambda'$ denotes the transpose of $\lambda$.  In other words, $a$ is the last row in $\lambda$ with $n$ boxes and $b$ is the last column in $\lambda$ with $n$ boxes.  From Lewis Carroll's identity, we have that 
\begin{equation}\label{eq:CarrolPartition}
\det(A) \det(A_{a, n}^{b, n})=\det(A_a^b)\det(A_n^n)-\det(A_a^n)\det(A_n^b).
\end{equation}

Let's see what we know about the signs of these determinants using our inductive hypothesis.  Say $I:=\{i_1 < \cdots < i_k\}$ and $J:=\{j_1< \cdots < j_k\}$, and let $\lambda_I^J$ denote the Young diagram obtained from $\lambda$ by removing rows indexed by $I$ and columns indexed by $J$. Note that $$A_I^J=M(R,C)_I^J\res{\lambda_I^J}=M(R\setminus\{r_{i_1}, \dots, r_{i_k}\},C\setminus\{c_{j_1}, \dots, c_{j_k}\})\res{\lambda_I^J}.$$ Also,
$\lambda_I^J$ has Durfee square of size at most $k$.  So we can use the inductive hypothesis to compute the signs of all of the determinants in \eqref{eq:CarrolPartition} other than $\det(A)$.

Let's consider which determinants in \eqref{eq:CarrolPartition} are zero. The shape $\lambda_{a, n}^{b, n}$ contains the staircase $(n-2,\dots,1)$ and the shapes $\lambda_n^n,\lambda_a^n$,  and $\lambda_n^b$ contain the staircase  $(n-1,\dots,1)$.  However, $\lambda_a^b$ may not contain the staircase $(n-1,\dots,1)$ (e.g. consider $\lambda=(3,3,1)$), so $\det A_a^b$ may be zero.  Now we need to determine when $\lambda_I^J$ is $(R\setminus\{r_{i_1}, \dots, r_{i_k}\},C\setminus\{c_{j_1}, \dots, c_{j_k}\})$-admissible.  Consider $A_{a, n}^{b, n}$ and pick two row indices $p, q \notin \{a, n\}$ with $p<q$ and $r_p=r_q$. Because $\lambda$ is $(R, C)$-admissible, rows $p, q$ have different support, so $\lambda_p>\lambda_q$. Further, because $R$ is listed in weakly increasing order, $p>a$. We would like to argue that rows $p':=\delta_{a, n}(p)$ and $q':=\delta_{a, n}(q)$ of $A_{a, n}^{b, n}$ have distinct support. Since $p>a$, we have $(\lambda_{a, n}^{b, n})_{p'}=\lambda_p-1$ and $(\lambda_{a, n}^{b, n})_{q'}=\lambda_q-1$, so $(\lambda_{a, n}^{b, n})_{p'}>(\lambda_{a, n}^{b, n})_{q'}$.  An analogous argument shows that columns of $A_{a, n}^{b, n}$ with the same index have different support.  Similarly, $A_a^b,A_a^n$, and $A_n^b$ are $(S, D)$-admissible for the appropriate $S, D$.  On the other hand, $A_n^n$ may not be (consider $R=(1,1,2)$, $C=(1,2,3)$, $\lambda=(3,2,1)$, for example).

Taking all of this together we find that the $\det(A_a^b)\det(A_n^n)$ term in \eqref{eq:CarrolPartition} may be zero but that $\det(A_{a, n}^{b, n})$ and $\det(A_a^n)\det(A_n^b)$ are always nonzero.  By induction, $\det(A_{a, n}^{b, n})$ has sign $(-1)^{|\mu|+a+b+1}$ and $\det(A_a^n)\det(A_n^b)$ has sign $(-1)^{a+b}$.  If $\det(A_a^b)\det(A_n^n)$ is nonzero it has sign $(-1)^{a+b+1}$.  Thus, $\det(A_a^b)\det(A_n^n)-\det(A_a^n)\det(A_n^b)$ always has sign $(-1)^{a+b+1}$ and $\det(A)$ is always nonzero with sign $(-1)^{|\mu|}$.
\end{proof}

\begin{cor}\label{cor:partitioncompdet}
Let $\lambda \subseteq n^n$ be a Young diagram and let $\mu:= n^n /\lambda$. Suppose $\mu$ has Durfee square of size at most $k$, $M$ is a $k$-positive $m \times m$ matrix, and $R,C\in \multichoose{[m]}{n}$. Then

\[ (-1)^{|\lambda|}\det M(R,C)\res\mu \geq 0
\]
and equality holds if and only if $ (n^n/ (n-1, n-2, \dots, 1, 0) ) \nsubseteq \mu$ (or equivalently, $\lambda \nsubseteq (n-1, n-2, \dots, 1, 0)$) or if $\mu$ is not $(R,C)$-admissible.
\end{cor}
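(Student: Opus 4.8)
The plan is to reduce the statement about the complement shape $\mu = n^n/\lambda$ directly to Proposition~\ref{prop:partitiondet} by means of a symmetry of the grid. Concretely, I would rotate the $n \times n$ grid by $180^\circ$: let $\rho$ be the involution on $[n]^2$ sending $(i,j)$ to $(n+1-i, n+1-j)$. Under $\rho$, a Young diagram $\mu$ (in English notation, anchored at the northwest corner) is carried to the Young diagram $\widehat\lambda := \rho(\mu)$ anchored at the southeast corner; equivalently $\widehat\lambda$ is the ``180-degree rotation'' of $\mu$, which as a partition inside $n^n$ is again a Young diagram, and it is exactly the complement of $\mu$ read from the other corner — but the cleanest bookkeeping is: $\mu = n^n/\lambda$ means $\rho(\mu)$ is the Young diagram $\lambda^{\mathrm{rot}}$ with $(\lambda^{\mathrm{rot}})_i = \lambda_{n+1-i}$ reflected, so that the Durfee square of $\rho(\mu)$ equals the Durfee square of $\mu$, which has size at most $k$ by hypothesis.

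Next I would track what $\rho$ does to the matrix side. If $M' $ denotes the matrix obtained from $M(R,C)$ by reversing the order of both the rows and the columns, then $M'\res{\rho(\mu)}$ is the same matrix as $M(R,C)\res{\mu}$ up to simultaneously reversing rows and columns, so $\det M'\res{\rho(\mu)} = \det M(R,C)\res{\mu}$ (the two row reversals and two column reversals contribute sign $(-1)^{\binom{n}{2}}$ each from rows and columns, which cancel). Moreover $M'$ is $M(R^{\mathrm{rev}}, C^{\mathrm{rev}})$ for the reversed multisets — but reversing a weakly increasing multiset makes it weakly decreasing, so to stay within the conventions of Definition~\ref{defn:RC} I would instead observe that $M'$ is obtained from the $k$-positive matrix $w_0 M w_0$ (which is again $k$-positive, since conjugating by the reversal permutation negates no minors of any fixed size — reversing all rows and all columns of a $p \times p$ submatrix multiplies its determinant by $(-1)^{2\binom{p}{2}} = 1$) by the operation $X(R^*, C^*)$ for suitable multisets $R^*, C^*$ obtained by applying $i \mapsto m+1-i$ to $R$ and $C$ and re-sorting. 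Thus $\det M'\res{\rho(\mu)} = \pm\det (w_0Mw_0)(R^*,C^*)\res{\widehat\lambda}$ with the sign absorbed as above, and $w_0 M w_0$ is $k$-positive.

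Now I would apply Proposition~\ref{prop:partitiondet} to the Young diagram $\widehat\lambda = \rho(\mu) \subseteq n^n$, whose ``complement'' in that statement is $n^n/\widehat\lambda = \rho(\lambda)$, a rotation of $\lambda$ with $|\rho(\lambda)| = |\lambda|$. The proposition gives $(-1)^{|\rho(\lambda)|} \det (w_0Mw_0)(R^*,C^*)\res{\widehat\lambda} \ge 0$, i.e. $(-1)^{|\lambda|}\det M(R,C)\res{\mu} \ge 0$, with equality iff the staircase $(n, n-1, \dots, 1)$ is not contained in $\widehat\lambda$ or $\widehat\lambda$ is not $(R^*,C^*)$-admissible. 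Translating the first condition back through $\rho$: $(n,n-1,\dots,1) \subseteq \widehat\lambda = \rho(\mu)$ iff the antidiagonal staircase is contained in $\mu$, i.e. iff $n^n/(n-1,n-2,\dots,1,0) \subseteq \mu$, which is exactly $\lambda \subseteq (n-1, n-2, \dots, 1, 0)$. The admissibility condition is preserved because $\rho$ just relabels rows/columns in reverse order and the row/column label multisets are carried to each other order-reversingly, so equal labels with equal support correspond under $\rho$ to equal labels with equal support; hence $\widehat\lambda$ fails $(R^*,C^*)$-admissibility iff $\mu$ fails $(R,C)$-admissibility.

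The main obstacle, and the only part requiring genuine care, is the sign bookkeeping in the two paragraphs above: making sure that reversing rows and columns of the ambient $m\times m$ matrix, reversing and re-sorting the multisets $R,C$, and rotating the shape all compose to the identity on the determinant (no stray sign), and that the Durfee square, the staircase-containment condition, and $(R,C)$-admissibility are each manifestly invariant under the $180^\circ$ rotation. I expect all of these to check out cleanly — the $(-1)^{\binom{n}{2}}$ factors from row and column reversals cancel in pairs — but it is worth writing the rotation $\rho$ explicitly and verifying each invariance rather than asserting it, since an off-by-one in the staircase translation would change the statement.
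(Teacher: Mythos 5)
Your proposal is correct and matches the paper's argument in all essentials: the paper also reduces to Proposition~\ref{prop:partitiondet} via a symmetry of the grid (it uses the antidiagonal transpose $M \mapsto \dot{w_0} M^T \dot{w_0}$, sending $(R,C)$ to $(\overline{C},\overline{R})$, rather than your $180^\circ$ rotation $M \mapsto w_0 M w_0$), with the same checks that $k$-positivity, the determinant, the staircase condition, and admissibility all transfer. The only detail to tighten is your formula for the rotated shape, which should read $\widehat{\lambda}_i = n - \lambda_{n+1-i}$.
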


\begin{proof} Let $\dot{w_0}$ denote the matrix with ones on the antidiagonal and zeros elsewhere. For a multiset $J=\{j_1 \leq \cdots \leq j_n\}$, let $\overline{J}:=\{\overline{j}_1 \leq \cdots \leq \overline{j}_n\}$ where $\overline{j}_i:=n+1-j_{n+1-i}$.

Let $M'$ be the antidiagonal transpose of $M$; in symbols, $M'=\dot{w_0} M^T \dot{w_0}$. Taking antidiagonal transpose does not effect the determinant, so $M'$ is also $k$-positive.

If we transpose $M(R,C)\res{\mu}$ across the antidiagonal, we obtain the matrix $$N:=M'(\overline{C}, \overline{R})\res{\nu},$$ where $\nu$ is the Young diagram obtained from the skew-shape $\mu$ by reflecting across the antidiagonal. Applying \cref{prop:partitiondet}, we have that $\det N$ has sign $|\lambda|$ if $\nu$ is $(\overline{C}, \overline{R})$-admissible and is zero otherwise. It is not hard to check that $\nu$ is $(\overline{C}, \overline{R})$-admissible if and only if $\mu$ is $(R, C)$-admissible.
\end{proof}

We can use \cref{prop:immDet} to rewrite \cref{prop:partitiondet} and \cref{cor:partitioncompdet} in terms of immanants.

\begin{cor}\label{cor:partitionimm}
Let $v \in S_n$ avoid $1324$ and $2143$. Suppose $\gr{v,w_0}$ is a Young diagram $\lambda$ with Durfee square of size at  most $k$. If $M$ is a $k$-positive $m \times m$ matrix and $R,C\in \multichoose{[m]}{n}$ such that $\lambda$ is $(R,C)$-admissible, then $\Imm_v M(R,C)>0$.
\end{cor}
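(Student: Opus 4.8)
The plan is to translate the statement about immanants directly into the determinantal language already established, and then invoke \cref{prop:partitiondet}. By \cref{prop:immDet}, since $v$ avoids $1324$ and $2143$, we have $\Imm_v(X) = (-1)^{\ell(v)}\det(X\res{\gr{v,w_0}})$, and hence for any matrix $M$,
\[
\Imm_v M(R,C) = (-1)^{\ell(v)}\det M(R,C)\res{\gr{v,w_0}}.
\]
(Strictly, one should note that substituting the repeated-row/column matrix $M(R,C)$ into the indeterminate identity of \cref{prop:immDet} is legitimate, exactly as in \cref{lem:dualCanonicalDet}.) Since by hypothesis $\gr{v,w_0} = \lambda$ is a Young diagram with Durfee square of size at most $k$, we are precisely in the setting of \cref{prop:partitiondet} with $\mu := n^n/\lambda$, so that $(-1)^{|\mu|}\det M(R,C)\res{\lambda} \geq 0$, with equality only if $(n,n-1,\dots,1)\nsubseteq\lambda$ or $\lambda$ is not $(R,C)$-admissible.

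Next I would reconcile the two sign conventions. \cref{prop:partitiondet} gives the sign of $\det M(R,C)\res{\lambda}$ as $(-1)^{|\mu|}$, whereas the immanant carries the factor $(-1)^{\ell(v)}$. So I need the identity $\ell(v) \equiv |\mu| \pmod 2$, i.e. $\ell(v) \equiv \binom{n}{2} - |\lambda| \pmod 2$. This should follow from the fact that when $\gr{v,w_0}$ is the Young diagram $\lambda$, the non-inversions of $v$ fill out exactly $\lambda$ in an appropriate sense: each cell of $\lambda$ not on the graph $\gro{v}$ corresponds to a sandwiched point (by \cref{lem:graphCharacterization}), and $|\gr{v,w_0}|$ relates to $\ell(v)$ by a standard count — in fact $|\gr{v,w_0}| = |\lambda|$ and the number of inversions $\ell(v) = \binom{n}{2} - (\text{number of non-inversions of } v)$, while the number of cells of $\lambda$ strictly above the relevant diagonal encodes the non-inversions. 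I would make this precise with a short lemma or a direct count; alternatively, one can observe that $(-1)^{\ell(v)}\det(X\res{\gr{v,w_0}})$ is the Kazhdan–Lusztig immanant, which is a dual canonical basis element and hence (by Lusztig's positivity, or just by the $k=n$ case of the present circle of results) positive on totally positive matrices, forcing the sign of $\det(X\res{\gr{v,w_0}})$ on such matrices to be $(-1)^{\ell(v)}$; comparing with \cref{prop:partitiondet} evaluated on a totally positive matrix (where $(n,n-1,\dots,1)\subseteq\lambda$ automatically since $\lambda$ being a nonzero such graph must contain the staircase — see below) pins down $\ell(v)\equiv|\mu|$.

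Finally I would handle the non-vanishing. We are given that $\lambda$ is $(R,C)$-admissible, so that equality branch in \cref{prop:partitiondet} is excluded. The only remaining way to get equality is $(n,n-1,\dots,1)\nsubseteq\lambda$; I must rule this out using that $\Imm_v$ is genuinely a Kazhdan–Lusztig immanant attached to a permutation. Indeed $\gro{v}\subseteq\gr{v,w_0}=\lambda$, and $\gro{v}$ is the graph of a permutation, so it meets every row and every column; this forces $\lambda$ to have a box in every row and every column, and combined with $\lambda$ being a partition (weakly decreasing row lengths) one gets $\lambda_n\geq 1$, $\lambda_{n-1}\geq 2$, \dots, i.e. $(n,n-1,\dots,1)\subseteq\lambda$. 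Hence $\det M(R,C)\res{\lambda}\neq 0$, and together with the sign computation, $\Imm_v M(R,C) = (-1)^{\ell(v)}\det M(R,C)\res{\lambda} = (-1)^{\ell(v)}(-1)^{|\mu|}\,\lvert\det M(R,C)\res{\lambda}\rvert > 0$. The main obstacle is the bookkeeping in the parity identity $\ell(v)\equiv|\mu|\pmod 2$; everything else is a direct translation, and even that parity fact is likely already implicit in the proof of \cref{prop:immDet} via Sjöstrand's work, so it may suffice to cite it.
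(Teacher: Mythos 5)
Your overall strategy is the same as the paper's (rewrite the immanant via \cref{prop:immDet}, feed the resulting determinant into \cref{prop:partitiondet}, rule out the vanishing cases, and reconcile the two signs), but the step you yourself flag as ``the main obstacle'' --- the parity identity $\ell(v)\equiv|\mu|\pmod 2$ --- is exactly the substantive content of the paper's proof, and you leave it as a sketch. The paper proves it by exhibiting a bijection between boxes of $\mu=n^n/\lambda$ and inversions of $v$: a box of $\mu$ in row $r$, column $c$ gives the inversion $(v^{-1}(c),r)$, and conversely if an inversion $(a,b)$ had its corresponding box $(b,v(a))$ outside $\mu$, that point would be sandwiched by a non-inversion $\lrangle{a,j}$ and $v(a)\,v(b)\,v(j)$ together with a suitable earlier entry would form a $1324$ pattern. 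So $1324$-avoidance enters essentially here, not just through \cref{prop:immDet}; your first sketch (``the cells above the relevant diagonal encode the non-inversions'') does not engage with this. Your alternative route --- pinning down the sign by comparing against Lusztig/Rhoades--Skandera positivity on totally positive matrices, where \cref{prop:partitiondet} applies with $k=n$ and $R=C=[n]$ --- would be logically valid, but it too is not carried out, and it imports an external theorem where the paper's argument is a self-contained count.

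There is also a smaller error in your non-vanishing argument: from $\gro{v}\subseteq\lambda$ you infer only that $\lambda$ has a box in every row and column, and then claim this plus weakly decreasing row lengths gives $(n,n-1,\dots,1)\subseteq\lambda$. That inference is false as stated: $\lambda=(n,1,1,\dots,1)$ has a box in every row and column but does not contain the staircase. The conclusion is still correct, but you need the full permutation structure (among rows $i,\dots,n$ some $v_j$ is at least $n-i+1$, whence $\lambda_i\geq\lambda_j\geq v_j\geq n-i+1$), or, as the paper does, simply observe that $w_0\in[v,w_0]$ so $\gro{w_0}$ --- the antidiagonal itself --- lies in $\gr{v,w_0}=\lambda$, which is literally the staircase condition.
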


\begin{proof} Note that $\gro{w_0} \subseteq \gr{v, w_0}$ implies $\lambda$ contains the partition $(n, n-1, \dots, 1)$.  So, by \cref{prop:partitiondet}, we know that $(-1)^{|\mu|}\det M(R,C)\res{\gr{v,w_0}}>0$ where $\mu=n^n/\lambda$.

Notice that if a box of $\mu$ is in row $r$ and column  $c$  then $v(r)<c$ and $v^{-1}(c)<r$. This means that $(v^{-1}(c),r)$ is an inversion. If $(a, b)$ is an inversion of $v$ and the box in row $b$ and column $v(a)$ is not in $\mu$, then $(b,v(a))$ is sandwiched by some non-inversion $\langle a,j\rangle$ for some $j$. But then $1 ~v(a)~v(b)~v(j)$ is an occurrence of the pattern 1324, a contradiction.  So $(b,v(a))$ is in $\mu$.  This means boxes in $\mu$ are in bijection with inversions of $v$ and $(-1)^{\ell(v)}\det M(R,C)\res{\gr{v,w_0}}=(-1)^{|\mu|}\det M(R,C)\res{\gr{v,w_0}}>0$.  By \cref{prop:immDet}, this means $\Imm_v M(R,C)>0$.
\end{proof}

\begin{cor}\label{cor:partitioncompimm}
Let $v \in S_n$ avoid $1324$ and $2143$. Suppose $\gr{v,w_0}$ is $\lambda=n^n/\mu$ for some partition $\mu$ and the largest square in $\lambda$ is of size at most $k$. If $M$ is a $k$-positive $m \times m$ matrix and $R,C\in \multichoose{[m]}{n}$ such that $\lambda$ is $(R,C)$-admissible, then $\Imm_v M(R,C)>0$.
\end{cor}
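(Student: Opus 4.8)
\textbf{Proof proposal for Corollary~\ref{cor:partitioncompimm}.}

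The plan is to reduce this statement to \cref{cor:partitioncompdet} in exactly the way that \cref{cor:partitionimm} was reduced to \cref{prop:partitiondet}. By \cref{prop:immDet}, we have $\Imm_v M(R,C) = (-1)^{\ell(v)} \det M(R,C)\res{\gr{v,w_0}} = (-1)^{\ell(v)} \det M(R,C)\res{\lambda}$, where $\lambda = n^n/\mu$. Since $\lambda$ is $(R,C)$-admissible by hypothesis, \cref{cor:partitioncompdet} tells us that $(-1)^{|\mu|}\det M(R,C)\res{\lambda}$ is either strictly positive or equals zero, and it vanishes precisely when $\lambda \nsubseteq (n-1,n-2,\dots,1,0)$ or $\lambda$ is not $(R,C)$-admissible. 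The latter is ruled out by hypothesis, so the only thing I must check is that $\lambda$, being equal to $\gr{v,w_0}$, always contains the staircase $(n-1,n-2,\dots,1,0)$, and that the sign $(-1)^{|\mu|}$ agrees with $(-1)^{\ell(v)}$.

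First I would observe that $\gro{v}\subseteq \gr{v,w_0}$ always, since $v \in [v,w_0]$; in particular every point $(i,v_i)$ lies in $\lambda$. I claim this forces $\mu = n^n/\lambda$ to be contained in the staircase $(n-1,n-2,\dots,1,0)$, i.e. that the box in row $i$ is absent from $\mu$ for all columns $\geq n-i+2$ — equivalently, $\lambda \supseteq (n-1, n-2, \dots, 1, 0)$. Indeed, for $\lambda$ to be a valid Young diagram (in English notation, as the complement of a partition $\mu$ anchored in the northeast) containing one point in each row and each column, a short argument via \cref{lem:graphCharacterization} shows $\lambda$ must in fact contain the full antidiagonal staircase; this is the analogue of the remark ``$\gro{w_0}\subseteq \gr{v,w_0}$ implies $\lambda$ contains $(n,n-1,\dots,1)$'' used in the proof of \cref{cor:partitionimm}. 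Concretely, $w_0 \in [v,w_0]$ gives $(i, n-i+1)\in\gr{v,w_0}$ for all $i$, so $\lambda$ contains the staircase and hence the strictly smaller staircase $(n-1, \dots, 1, 0)$; this rules out the first vanishing condition in \cref{cor:partitioncompdet}.

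Next I would identify the sign. The boxes of $\mu = n^n/\lambda = n^n/\gr{v,w_0}$ should be in bijection with the inversions of $v$, exactly mirroring the bijection between boxes of $n^n/\gr{v,w_0}$ and inversions of $v$ established in the proof of \cref{cor:partitionimm}; since that argument is phrased for the complement-of-$\gr{v,w_0}$ in $n^n$ and here $\gr{v,w_0} = n^n/\mu$ is itself a skew shape, I would re-run the same 1324-avoidance argument: if $(a,b)$ is an inversion of $v$ then the box in row $b$, column $v(a)$ lies outside $\gr{v,w_0}$ (else it would be sandwiched by a non-inversion $\langle a, j\rangle$, producing a 1324 pattern $1\,v(a)\,v(b)\,v(j)$), and conversely every box outside $\gr{v,w_0}$ arises this way, using \cref{lem:graphCharacterization}. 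Hence $|\mu| = \ell(v)$ and $(-1)^{|\mu|}\det M(R,C)\res{\lambda} = (-1)^{\ell(v)}\det M(R,C)\res{\gr{v,w_0}} = \Imm_v M(R,C) > 0$, as desired.

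The main obstacle I anticipate is bookkeeping, not depth: making sure the bijection-with-inversions argument, which in \cref{cor:partitionimm} was run for a $\gr{v,w_0}$ that is literally a Young diagram $\lambda$ (so $\mu$ is its straight-shape complement), goes through verbatim when $\gr{v,w_0}$ is instead a complement $n^n/\mu$ — one has to be careful that ``the box in row $b$ and column $v(a)$'' genuinely lies in the region being counted and that no double-counting occurs. I expect this to be routine given \cref{lem:graphCharacterization} and the 1324-avoidance hypothesis, so that the corollary follows cleanly from \cref{cor:partitioncompdet}, \cref{prop:immDet}, and the staircase containment $\gro{w_0}\subseteq\gr{v,w_0}$.
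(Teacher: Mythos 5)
Your overall strategy is exactly the paper's: apply \cref{prop:immDet}, invoke \cref{cor:partitioncompdet} (with the roles of the partition and its complement swapped, so the sign is $(-1)^{|\mu|}$ and the non-vanishing condition is $n^n/(n-1,\dots,1,0)\subseteq \gr{v,w_0}$, which follows from $\gro{w_0}\subseteq\gr{v,w_0}$), and then show $|\mu|=\ell(v)$ via a bijection between boxes of $\mu$ and inversions of $v$. The paper simply cites the bijection from the proof of \cref{cor:partitionimm}; you attempt to re-derive it, and this is where there is a genuine error.

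You assign to an inversion $(a,b)$ the box in row $b$, column $v(a)$, and claim it lies outside $\gr{v,w_0}$. In the present setting this is false: $\gr{v,w_0}=n^n/\mu$ with $\mu$ a (northwest-anchored) partition, so $\gr{v,w_0}$ is closed under moving south and east; since $(a,v_a)\in\gro{v}\subseteq\gr{v,w_0}$ and $b>a$, the box $(b,v_a)$ is \emph{always} in $\gr{v,w_0}$. Concretely, for $v=213$ one has $\gr{v,w_0}=3^3/(1)$ and the unique inversion $(1,2)$, but $(2,2)\in\gr{v,w_0}$. The box that works here is the \emph{northwest} corner $(a,v_b)$ of the inversion's rectangle (in \cref{cor:partitionimm}, where $\gr{v,w_0}$ is northwest-anchored, the southeast corner $(b,v_a)$ is the right choice — the two cases are mirror images). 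With $(a,v_b)$ in hand, one checks via \cref{lem:graphCharacterization} that a sandwiching non-inversion would force a forbidden pattern, and that conversely every box of $\mu$ yields the inversion $(r,v^{-1}(c))$. You did flag "making sure the box genuinely lies in the region being counted" as the delicate point, but you then wrote down the wrong box rather than resolving it, so as written the sign computation $|\mu|=\ell(v)$ is not established. (There is also a small slip in the non-vanishing discussion: the condition is $\mu\subseteq(n-1,\dots,1,0)$, i.e. $n^n/(n-1,\dots,1,0)\subseteq\lambda$, not "$\lambda\supseteq(n-1,\dots,1,0)$"; your antidiagonal argument does prove the correct statement.)
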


\begin{proof}
Note that $\gro{w_0} \subseteq \gr{v,w_0}$ implies $\lambda$ contains the partition $(n^n/(n-1, n-2, \dots, 1,0))$. So, by \cref{cor:partitioncompdet}, we know that $(-1)^{|\mu|}\det M(R,C)\res{\gr{v,w_0}}>0$.

As in the proof of \cref{cor:partitionimm}, there is a bijection between boxes of $\mu$ and inversions of $v$.  So, we know $(-1)^{\ell(v)}\det M(R,C)\res{\gr{v,w_0}}=(-1)^{|\mu|}\det M(R,C)\res{\gr{v,w_0}}>0$.  By \cref{prop:immDet}, this means $\Imm_v M(R,C)>0$.
\end{proof}

\subsection{General Case}\label{sec:general}

The following proposition will allow us to restrict to permutations that are not elements of a maximal parabolic subgroup of $S_n$.  To state the lemma we temporarily denote the longest permutation in $S_j$ by $w_{(j)}$.

%\begin{lem} \label{lem:multipleblock}
%Suppose $\gr{v, w_0}$ is block-antidiagonal. Let $v_1 \in S_j$ and $v_2 \in S_{n-j}$ be permutations such that upper-right antidiagonal block of $\gr{v, w_0}$ is equal to $\gr{v_1,w_{(j)}}$ and the other antidiagonal block is equal to $\gr{v_2,w_{(n-j)}}$. If $M$ is an $n\times n$ matrix, then
%\[(-1)^{\ell(v)}\det M\res{\gr{v, w_0}}=(-1)^{\ell(v_1)}\det M_1\res{\gr{v_1,w_{(j)}}} \cdot (-1)^{\ell(v_2)}\det M_2\res{\gr{v_2,w_{(n-j)}}}\]
%where $M_1$ (resp. $M_2$) is the square submatrix of $M$ using columns $n-j+1$ through $n$ and rows $1$ through $j$ (resp. columns $1$ through $n-j$ and rows $j+1$ through $n$).
%\end{lem}

\begin{prop}[\protect{\cite[Corollary 4.9]{CSB}}] \label{prop:ImmMultipleBlock}
Suppose $v \in S_n$ is $1324$-, $2143$-avoiding and $\gr{v, w_0}$ is block-antidiagonal. Let $v_1 \in S_j$ and $v_2 \in S_{n-j}$ be permutations such that the upper-right antidiagonal block of $\gr{v, w_0}$ is equal to $\gr{v_1,w_{(j)}}$ and the other antidiagonal block is equal to $\gr{v_2,w_{(n-j)}}$. Then 
\[\Imm_v M=\Imm_{v_1} M([j], [n-j+1, n])\cdot \Imm_{v_2}M([j+1, n], [n-j]).
\]
\end{prop}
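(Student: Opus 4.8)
The plan is to reduce everything to the determinantal formula of \cref{prop:immDet} and then keep careful track of signs.

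\emph{Step 1: $v$ is block-antidiagonal, and $v_1,v_2$ avoid $1324$ and $2143$.} Since $\gro{v}\subseteq\gr{v,w_0}$, every point $(i,v_i)$ lies in one of the two antidiagonal blocks of $\gr{v,w_0}$; hence $v$ maps $[1,j]$ bijectively onto $[n-j+1,n]$ and $[j+1,n]$ bijectively onto $[1,n-j]$. Using \cref{lem:graphCharacterization} I would then check that $v_1$ and $v_2$ are exactly the permutations induced by $v$ on positions $[1,j]$ and $[j+1,n]$. The point is that a block-antidiagonal $v$ has no non-inversion $\lrangle{k,l}$ with $k\leq j<l$ — such a pair is automatically an inversion, since $v_k\geq n-j+1>n-j\geq v_l$ — so every non-inversion of $v$ lies within a single block and sandwiches only points of that block; since the graph of an upper interval determines its bottom element, this pins down $v_1$ and $v_2$. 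In particular $v_1$ and $v_2$ avoid $1324$ and $2143$, because the positions of a block include order-preservingly into $[n]$, so any occurrence of such a pattern in $v_1$ or $v_2$ would give one in $v$.

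\emph{Step 2: apply \cref{prop:immDet} three times.} For an arbitrary $n\times n$ matrix $M$, specializing \cref{prop:immDet} gives $\Imm_v M=(-1)^{\ell(v)}\det\big(M\res{\gr{v,w_0}}\big)$. By Step 1 and the block-antidiagonal hypothesis, $M\res{\gr{v,w_0}}$ is a block-antidiagonal matrix whose two blocks are, after the evident re-indexings, $B:=M([j],[n-j+1,n])\res{\gr{v_1,w_{(j)}}}$ and $C:=M([j+1,n],[n-j])\res{\gr{v_2,w_{(n-j)}}}$. Expanding the determinant of a block-antidiagonal matrix — move the last $j$ columns to the front, a column permutation of sign $(-1)^{j(n-j)}$ — gives $\det\big(M\res{\gr{v,w_0}}\big)=(-1)^{j(n-j)}\det B\,\det C$. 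Applying \cref{prop:immDet} in sizes $j$ and $n-j$ (legitimate since $v_1,v_2$ avoid both patterns), $\det B=(-1)^{\ell(v_1)}\Imm_{v_1}M([j],[n-j+1,n])$ and $\det C=(-1)^{\ell(v_2)}\Imm_{v_2}M([j+1,n],[n-j])$.

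\emph{Step 3: match signs, and the main obstacle.} Combining Step 2,
\[
\Imm_v M=(-1)^{\ell(v)+j(n-j)+\ell(v_1)+\ell(v_2)}\,\Imm_{v_1}M([j],[n-j+1,n])\cdot\Imm_{v_2}M([j+1,n],[n-j]),
\]
so it remains to see the exponent is even. Splitting the inversions of $v$ according to whether both indices lie in $[1,j]$, both lie in $[j+1,n]$, or the pair straddles the two blocks — and recalling from Step 1 that every straddling pair is an inversion, with exactly $j(n-j)$ of them — gives $\ell(v)=\ell(v_1)+\ell(v_2)+j(n-j)$. Hence the exponent is $2\big(\ell(v_1)+\ell(v_2)+j(n-j)\big)$, which is even, and the identity follows. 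The step I expect to be the only real obstacle is Step 1, namely verifying that the two antidiagonal blocks of $\gr{v,w_0}$ are literally $\gr{v_1,w_{(j)}}$ and $\gr{v_2,w_{(n-j)}}$ rather than merely being contained in them; but \cref{lem:graphCharacterization} reduces this to the elementary observation that a block-antidiagonal permutation has no non-inversion crossing the blocks, after which everything is routine determinant and inversion-counting bookkeeping.
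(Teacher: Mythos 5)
Your proposal is correct. Note that the paper itself gives no proof of this proposition; it is imported as \cite[Corollary 4.9]{CSB}, so there is nothing internal to compare against. Your derivation --- apply \cref{prop:immDet} to $v$, observe that $M\res{\gr{v,w_0}}$ is block-antidiagonal with blocks $M([j],[n-j+1,n])\res{\gr{v_1,w_{(j)}}}$ and $M([j+1,n],[n-j])\res{\gr{v_2,w_{(n-j)}}}$, expand the determinant with the sign $(-1)^{j(n-j)}$, apply \cref{prop:immDet} to $v_1,v_2$, and cancel all signs using $\ell(v)=\ell(v_1)+\ell(v_2)+j(n-j)$ --- is a complete and valid self-contained argument. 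The key observations (every pair straddling the two blocks is an inversion, hence every non-inversion and every sandwiched point lives inside a single block; pattern occurrences in $v_1$ or $v_2$ lift to $v$, so \cref{prop:immDet} applies to them) are all correct. The only point worth tightening is your identification of $v_1,v_2$ with the permutations induced by $v$ on $[1,j]$ and $[j+1,n]$: you invoke that $\gr{u,w_0}$ determines $u$, which is true but not proved in this paper; alternatively you can sidestep it entirely by simply taking $v_1,v_2$ to be the induced permutations, since \cref{lem:graphCharacterization} then shows directly that the two blocks equal $\gr{v_1,w_{(j)}}$ and $\gr{v_2,w_{(n-j)}}$, which is all the statement requires.
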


\begin{figure}
    \centering
    \includegraphics[height=0.3\textheight]{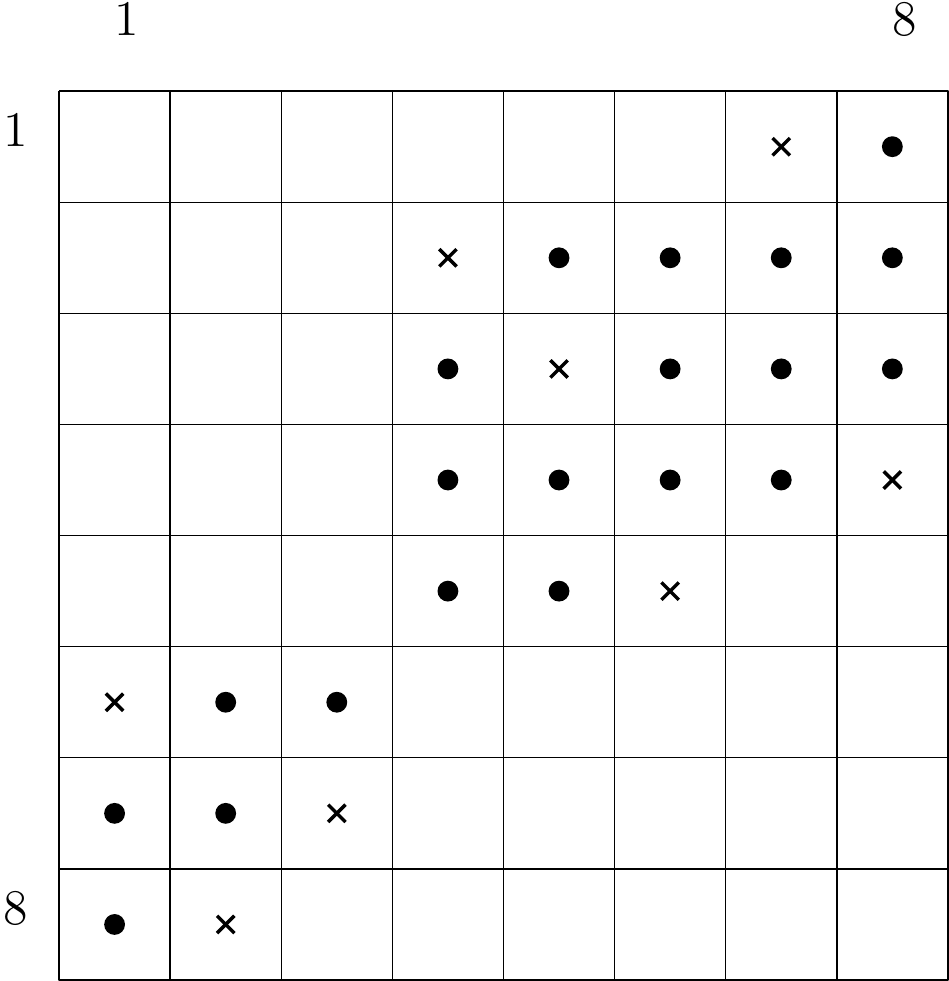}
    \caption{An example where $\gr{v,w_0}$ is block-antidiagonal. Here, $v=74586132$. In the notation of \cref{prop:ImmMultipleBlock}, $j=3$, $v_1=41253$, and $v_2=132$.}
    \label{fig:blockAnti}
\end{figure}

See Figure~\ref{fig:blockAnti} for an example illustrating a block-antidiagonal $\gr{v, w_0}$ and the notation of Proposition~\ref{prop:ImmMultipleBlock}.

To analyze the determinants appearing in Lewis Carroll's identity for $\det X(R, C)\res{\gr{v, w_0}}$, we will use the following two propositions. 

\begin{prop} \label{prop:deleteDotDetAnti}

Let $v\in S_n$ be $2143$- and $1324$-avoiding, and choose $i\in [n]$. Let $x \in S_{n-1}$ be the permutation $x: \delta_i(j) \mapsto \delta_{v_i}(v_j)$ (that is, $x$ is obtained from $v$ by deleting $v_i$ from $v$ in one-line notation and shifting the remaining numbers appropriately). Then 
\begin{enumerate}
  \item $\gr{x, w_0}=(\gr{v,w_0} \setminus \{(p, q): (p, q) \text{ is sandwiched only by a non-inversion involving }i\})_i^{v_i}$;
  
    \item If $(i, v_i)$ is not a spanning corner of $\gr{v,w_0}$, then $\gr{x,w_0}=\gr{v,w_0}_i^{v_i}$.
  
  \item For all $i$, $\det(M\res{\gr{x,w_0}})=\det(M\res{\gr{v,w_0}_{i}^{v_i}}).$
\end{enumerate}

% If $(i, v_i)$ is not a spanning corner of $\gr{v,w_0}$, then $\gr{x,w_0}=\gr{v,w_0}_i^{v_i}$. Moreover, for all $i$,
% \begin{equation} \label{eqn:deleteDotDetAnti}
% \det(M\res{\gr{x,w_0}})=\det(M\res{\gr{v,w_0}_{i}^{v_i}}).
% \end{equation}
\end{prop}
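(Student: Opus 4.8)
The plan is to prove the three parts in order, with (1) doing essentially all the work and (2), (3) following quickly.

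For part (1), I would use the characterization of $\gr{v,w_0}$ from \cref{lem:graphCharacterization}: a point lies in $\gr{v,w_0}$ iff it is in $\gro{v}$ or is sandwiched by some non-inversion of $v$. Deleting $v_i$ from one-line notation sets up the order-preserving bijection $(\delta_i, \delta_{v_i})$ between $[n]^2$ (minus row $i$, column $v_i$) and $[n-1]^2$, and it is immediate that non-inversions of $x$ are exactly the images under this bijection of non-inversions $\lrangle{k,l}$ of $v$ with $k,l \neq i$. So I would check that a point $(\delta_i(p), \delta_{v_i}(q))$ lies in $\gr{x,w_0}$ iff $(p,q) \in \gro{x}$ (equivalently $(p,q)\in\gro{v}$ since $p\neq i$), or $(p,q)$ is sandwiched in $v$ by some non-inversion $\lrangle{k,l}$ with $k,l\neq i$. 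The only subtlety: a point of $\gr{v,w_0}$ sandwiched \emph{only} by non-inversions involving $i$ is precisely a point whose preimage survives into $\gr{v,w_0}_i^{v_i}$ but is \emph{not} in $\gr{x,w_0}$ — hence the set we subtract off. Conversely, a point sandwiched by \emph{some} non-inversion not involving $i$ remains sandwiched after deletion. One also has to be careful about $(i, v_i)$ itself and points in row $i$ or column $v_i$: these are simply discarded by the deletion operation $(\cdot)_i^{v_i}$ and never reappear, so they don't affect the identity. This bookkeeping — tracking which sandwiching non-inversions involve $i$ — is the main obstacle, and where the $2143$- and $1324$-avoidance is presumably needed to control the interaction of sandwiching rectangles (so that "sandwiched only by non-inversions through $i$" is a clean condition).

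For part (2), if $(i,v_i)$ is not a spanning corner, I would argue that every point of $\gr{v,w_0}$ that is sandwiched by a non-inversion involving $i$ is also sandwiched by some non-inversion not involving $i$; then the subtracted set in (1) is empty and $\gr{x,w_0} = \gr{v,w_0}_i^{v_i}$. The idea is that a non-inversion $\lrangle{i,l}$ (or $\lrangle{k,i}$) has its sandwiching rectangle contained in the bounding box $\bbox{i,v_i}$, which by hypothesis is strictly contained in some $\bbox{j,v_j}$; combined with \cref{lem:boundingboxes} and the structure of bounding boxes (\cref{prop:alternatingBoxesAnti}), one should be able to find a non-inversion $\lrangle{k,l}$ avoiding $i$ whose rectangle still covers the point. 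I expect this to require a short case analysis on the color/position of the bounding box containing $\bbox{i,v_i}$.

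For part (3): when $(i,v_i)$ is not a spanning corner, (2) gives $\gr{x,w_0} = \gr{v,w_0}_i^{v_i}$ and there is nothing to prove. When $(i,v_i)$ is a spanning corner, (1) says $\gr{v,w_0}_i^{v_i}$ differs from $\gr{x,w_0}$ only by the extra points $(p,q)$ sandwiched solely by non-inversions involving $i$. I would show all such extra points lie in a single row or a single column of $\gr{v,w_0}_i^{v_i}$ — indeed, a point sandwiched only by a non-inversion $\lrangle{i,l}$ must lie in row $i$... but row $i$ is deleted, so more precisely I should identify that these extra points, after deletion, all occupy one full row (or column) of zeros in $M\res{\gr{v,w_0}_i^{v_i}}$ that is forced to vanish, OR they form a rectangular overhang that can be cleared by row operations without changing the determinant. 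The cleanest route: show that in $M\res{\gr{v,w_0}_i^{v_i}}$, the rows/columns corresponding to these extra sandwiched points are (up to the zero pattern) proportional to or dominated by an adjacent row/column, so expanding $\det(M\res{\gr{v,w_0}_i^{v_i}})$ along them recovers $\det(M\res{\gr{x,w_0}})$. The key point is that multiplying by $M$'s generic entries doesn't matter because the relevant minors of the two zero-patterns agree — I would phrase this as: the extra entries all lie in a row (resp. column) all of whose other entries are also present in $\gr{x,w_0}$, hence contribute nothing new to any permutation expansion, giving equality of determinants. The main obstacle here is pinning down the exact shape of the discrepancy set; once that is a single row's or column's worth of boxes confined appropriately, the determinant equality is routine.
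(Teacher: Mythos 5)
Part (1) of your proposal is fine and is exactly the paper's (one-line) argument via \cref{lem:graphCharacterization}; note that pattern avoidance plays no role there, contrary to your hedge. For (2) and (3) the paper simply cites \cite[Proposition 4.17]{CSB}, so you are attempting more than the paper does, but your attempt does not close either part. In (2), your key intermediate claim --- that the sandwiching rectangle of a non-inversion $\lrangle{i,l}$ is contained in $\bbox{i,v_i}$ --- is false: for $v=2341$ and the non-inversion $\lrangle{2,3}$, the rectangle is rows $2$--$3$, columns $3$--$4$, while $\bbox{2,3}$ is the single point $(2,3)$. The statement of (2) is still true in that example, but your route to it breaks at the first step.

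The more serious gap is in (3), which is the real content of the proposition. First, the discrepancy set $Q=\gr{v,w_0}_i^{v_i}\setminus\gr{x,w_0}$ is in general a two-dimensional staircase (an initial chunk of each row and column it meets --- see Figure 4 of the paper), not a single row or column, so your ``cleanest route'' does not apply. Second, your fallback principle --- that extra positions lying in a row all of whose other positions are present ``contribute nothing new to any permutation expansion'' --- is simply wrong: for $P=\{(1,1),(1,2),(2,2)\}$ and $P'=P\cup\{(2,1)\}$ one has $\det(M\res{P})=m_{11}m_{22}$ but $\det(M\res{P'})=m_{11}m_{22}-m_{12}m_{21}$, even though the extra position $(2,1)$ sits in a row whose other position is present. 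What must actually be shown is that \emph{no} permutation whose graph lies in $\gr{v,w_0}_i^{v_i}$ passes through any point of $Q$; this is a genuine combinatorial claim, proved in \cite{CSB} via the interval structure of the supports of the rows and columns meeting $Q$ (rows supported on $\{v_i,\dots,v_i+j\}$, columns on $\{i,\dots,i+\ell\}$, with $Q$ an initial segment of each) together with a pigeonhole argument, and this is also where the pattern avoidance enters. Your proposal does not supply this argument, so the determinant identity in (3) remains unproven.
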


\begin{proof}
Statement (1) follows from \Cref{lem:graphCharacterization}.  Statements (2) and (3) are Proposition 4.17 from~\cite{CSB}.
\end{proof}

\begin{prop}\label{prop:sgn}
Let $v\in S_n$ be $1324$- and $2143$-avoiding such that the last bounding box of $\gr{v,w_0}$ is $\bbox{n, v_n}$, and the second to last box is $\bbox{a, v_a}$ for some $a<n$ with $1<v_a<v_n$.  Let $b=v^{-1}(1)$ and $d=v_a$.  Suppose $\det (M(R, C)\res{\gr{v, w_0}})^1_b\cdot\det (M(R, C)\res{\gr{v, w_0}})^d_a$ is nonzero and has sign $\sigma$.  Then
\begin{enumerate}
    \item If $\det (M(R, C)\res{\gr{v, w_0}})^1_a\cdot\det (M(R, C)\res{\gr{v, w_0}})^d_b\neq 0$, it has sign $-\sigma$.
    \item If $\det (M(R, C)\res{\gr{v, w_0}})^{1, d}_{a, b}\neq 0$, it has sign $\sigma \cdot (-1)^{\ell(v)}$.
\end{enumerate}
\end{prop}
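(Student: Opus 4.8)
The plan is to set up Lewis Carroll's identity (\cref{prop:lc-id}) for the matrix $N := M(R,C)\res{\gr{v,w_0}}$ with the row pair $\{a,b\}$ and column pair $\{1,d\}$, but first I need to deal with the ordering: the proposition's hypotheses name the rows as $a$ and $b$ and columns as $1$ and $d$ without asserting $a<b$ or $1<d$. Since $b = v^{-1}(1)$ and $(b,1)\in\gro{v}\subseteq\gr{v,w_0}$, and $\bbox{n,v_n}$ being the last bounding box with $1<v_a<v_n$ forces $1$ to be in the support only of certain rows; I would first pin down the relative order of $a$ and $b$ (and note $1<d$ since $d=v_a>1$). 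With the rows in sorted order, Lewis Carroll gives
\[
\det(N)\,\det\bigl(N^{1,d}_{\{a,b\}}\bigr) = \det(N^1_a)\det(N^d_b) - \det(N^1_b)\det(N^d_a)
\]
up to a global sign depending on whether $a<b$. Statement (1) is then essentially the observation that the two products on the right-hand side cannot simultaneously be nonzero with the same sign unless one of them vanishes — but more precisely, I want to show that whenever both are nonzero they have opposite signs, which is exactly what the sign bookkeeping in the Young-diagram case (cf.\ the end of the proof of \cref{prop:partitiondet}) accomplished. The mechanism there was: the four ``single-deletion'' determinants are controlled by induction/parity, and the $\pm1$ discrepancy between $\det(N^1_a)\det(N^d_b)$ and $\det(N^1_b)\det(N^d_a)$ comes from the transposition $(a\,b)$ acting on row indices (equivalently $(1\,d)$ on columns), contributing a factor of $-1$.

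To make this rigorous I would use \cref{prop:deleteDotDetAnti}(3): each single-row/single-column deletion $N^q_p$ equals (up to the order-preserving relabeling) $M(R',C')\res{\gr{x,w_0}}$ for the permutation $x$ obtained by deleting an appropriate value from $v$, and the relevant deletions here are exactly deleting $v_b = 1$ or $v_a = d$ from the one-line notation. So $\det(N^1_b) = \det\bigl(M(R^{(b)}, C^{(1)})\res{\gr{v', w_0}}\bigr)$ where $v' = v$ with the $1$ removed, etc. The key structural input is that deleting $v_n$ (the spanning corner of the last bounding box) reduces $\gr{v,w_0}$ to a block-antidiagonal shape, so \cref{prop:ImmMultipleBlock} factors the relevant immanants/determinants as products over blocks; then \cref{prop:alternatingBoxesAnti} tells us that after removing the last bounding box the shape genuinely splits, and induction on the number of bounding boxes (or on $n$) applies to the blocks. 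Statement (2) is the sign of the double deletion $\det(N^{1,d}_{a,b})$: this is $\det\bigl(M(R'',C'')\res{\gr{v'',w_0}}\bigr)$ where $v''$ is $v$ with both $1$ and $d$ deleted, and its sign relative to $\sigma$ is governed by $(-1)^{\ell(v)}$ because $\ell(v) - \ell(v'')$ counts exactly the inversions of $v$ destroyed by removing the two values $1$ and $d$, and $1 = v_b$ being the smallest value means every pair $(j,b)$ with $j<b$ is an inversion — combined with the non-inversion structure forced by $1324$- and $2143$-avoidance, this parity works out to $(-1)^{\ell(v)}$ modulo a fixed correction absorbed into the statement.

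The main obstacle I expect is the sign/parity bookkeeping in statement (2): relating $\sigma$ (a sign attached to a product of two determinants of deleted matrices) to $(-1)^{\ell(v)}$ requires carefully tracking how $\ell$ changes under the two single deletions versus the double deletion, how the order-preserving relabelings $\delta_I$ shift things, and how the ``spanning corner / bounding box'' hypotheses guarantee the block-antidiagonal decomposition needed to invoke the inductive hypothesis. A secondary subtlety is handling the possibility that $\det(N^1_a)\det(N^d_b)$ vanishes (the analogue of ``$\lambda^b_a$ need not contain the staircase'' in \cref{prop:partitiondet}): I would need to check that this vanishing is consistent with statement (1) — i.e.\ it can only happen on the ``$A^b_a A^n_n$'' side — and that the nonvanishing of the other product, which is what Lewis Carroll's identity then forces from the nonzero hypothesis on $\det(N)$... wait, actually $\det(N)$ is not assumed nonzero in this proposition, only the single products are, so I'd instead argue directly from the hypotheses: the nonvanishing of $\det(N^1_b)\det(N^d_a)$ with sign $\sigma$ plus Lewis Carroll plus the forced nonvanishing of $\det(N^{1,d}_{a,b})$ (via \cref{prop:deleteDotDetAnti} and the staircase-containment argument, since removing the last bounding box's corner keeps the remaining shape rich enough) pins down both remaining quantities.
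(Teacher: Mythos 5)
The paper does not actually prove this proposition here; it defers entirely to the proof of \cite[Theorem 4.18]{CSB}, where the signs of all five deleted determinants are computed by induction. Your sketch assembles some of the right ingredients (identify each deleted matrix with $\det M(R',C')\res{\gr{v',w_0}}$ for a smaller permutation $v'$, apply the inductive sign statement, track lengths), but it has two concrete gaps. First, \cref{prop:deleteDotDetAnti} only covers deleting a row $i$ together with the column $v_i$, i.e.\ the \emph{matched} deletions $N^1_b$ and $N^d_a$ (since $v_b=1$ and $v_a=d$). The cross terms $N^1_a$ and $N^d_b$ appearing in statement (1) delete row $a$ with column $1$ and row $b$ with column $d$, and $(a,1),(b,d)\notin\gro{v}$; you say explicitly that ``the relevant deletions here are exactly deleting $v_b=1$ or $v_a=d$,'' which leaves these two determinants unidentified. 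Showing that $\gr{v,w_0}{}^1_a$ and $\gr{v,w_0}{}^d_b$ are again (up to harmless entries) graphs of upper intervals for explicit smaller permutations is a separate structural argument using the bounding-box description, and without it statement (1) has no proof. Second, both statements are pure parity claims, and you defer exactly the parity computation: ``this parity works out to $(-1)^{\ell(v)}$ modulo a fixed correction absorbed into the statement'' is the conclusion restated, not an argument. The bookkeeping is genuinely delicate — one must reconcile $\ell(y)$, $\ell(z)$, and $\ell(v'')$ with $\ell(v)$, remembering that the inversion at positions $(a,b)$ is removed by both single deletions but only once by the double deletion; a test case such as $v=2413$ (so $a=1$, $b=3$, $d=2$) shows that naive counting does not immediately land on the stated signs.

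Separately, Lewis Carroll's identity is the wrong tool for proving this proposition: it is the \emph{consumer} of \cref{prop:sgn} in the proof of \cref{thm:detSign}, not its source. The identity relates the difference of the two products to $\det N\cdot\det N^{1,d}_{a,b}$, and since the sign of $\det N$ is unknown at this stage (determining it is the point of the induction), the identity cannot pin down the signs of the individual terms. Your closing paragraph, which proposes to recover ``both remaining quantities'' from the identity together with the hypothesis, is circular for this reason; likewise the appeal to \cref{prop:ImmMultipleBlock} via deleting $v_n$ is off-target, since neither deletion in the proposition involves position $n$.
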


\begin{proof}
This follows from the proof of \cite[Theorem 4.18]{CSB}.
\end{proof}

We can now determine the sign of $\det X(R, C)\res{\gr{v, w_0}}$ on $k$-positive matrices.

\begin{thm} \label{thm:detSign}
Let $v \in S_n$ avoid $1324$ and $2143$ and let $k$ be the size of the largest square in $\gr{v,w_0}$. Choose $R, C \in \multichoose{[m]}{n}$. For $M$ a $k$-positive $m \times m$ matrix, $$(-1)^{\ell(v)} \det M(R,C)\res{\gr{v,w_0}}\geq 0$$
and equality holds if and only if $\gr{v, w_0}$ is not $(R,C)$-admissible.
\end{thm}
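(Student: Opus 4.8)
The plan is to prove Theorem~\ref{thm:detSign} by induction on $n$, reducing the general case to the two special cases already handled in Section~\ref{sec:young} (Young diagram and complement-of-Young-diagram shapes), using Lewis Carroll's identity to peel off bounding boxes one at a time. The key structural input is the bounding-box decomposition: by Proposition~\ref{prop:alternatingBoxesAnti}, as long as $w_0 v$ is not in a maximal parabolic subgroup, $\gr{v,w_0}$ has bounding boxes alternating between blue and red (no purple ones), and if there is only one bounding box, then $\gr{v,w_0}$ is (a translate of) either a Young diagram or the complement of one, where Corollaries~\ref{cor:partitionimm} and~\ref{cor:partitioncompimm} apply directly after translating back through Proposition~\ref{prop:immDet}. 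If $w_0 v$ \emph{is} contained in a maximal parabolic subgroup, then $\gr{v,w_0}$ is block-antidiagonal, and Proposition~\ref{prop:ImmMultipleBlock} factors $\Imm_v M(R,C)$ as a product of two smaller immanants; the largest square in each block is still at most $k$, the $(R,C)$-admissibility of $\gr{v,w_0}$ is equivalent to admissibility of both blocks, and we finish by the inductive hypothesis applied to each factor (noting the sign $(-1)^{\ell(v)}$ splits as $(-1)^{\ell(v_1)}(-1)^{\ell(v_2)}$).

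\textbf{The inductive step in the main case.} Assume $\gr{v,w_0}$ has at least two bounding boxes and $w_0v$ is not in a maximal parabolic. I want to single out the last bounding box. After possibly applying the antidiagonal-transpose symmetry used in the proof of Corollary~\ref{cor:partitioncompdet} (which preserves $k$-positivity, negates nothing about the determinant, and swaps the roles of rows/columns and of $R,C$), I may assume the last bounding box is $\bbox{n,v_n}$ with the second-to-last box $\bbox{a,v_a}$ satisfying $1 < v_a < v_n$ — exactly the hypotheses of Proposition~\ref{prop:sgn}. Set $b = v^{-1}(1)$ and $d = v_a$, and apply Lewis Carroll's identity (Proposition~\ref{prop:lc-id}) to $A := M(R,C)\res{\gr{v,w_0}}$ with the row pair $\{a,b\}$ and column pair $\{1,d\}$:
\[
\det(A)\det\bigl(A^{1,d}_{a,b}\bigr) = \det\bigl(A^1_b\bigr)\det\bigl(A^d_a\bigr) - \det\bigl(A^1_a\bigr)\det\bigl(A^d_b\bigr).
\]
Each of the four determinants on the right and the one cofactor $A^{1,d}_{a,b}$ on the left corresponds, via Proposition~\ref{prop:deleteDotDetAnti}(3), to $\det$ of a restriction of a smaller matrix $M(R',C')$ to $\gr{x,w_0}$ for an appropriate $1324$-, $2143$-avoiding $x \in S_{n-1}$ (or $S_{n-2}$), each with largest square still of size at most $k$. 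So the inductive hypothesis computes the sign of each of these smaller determinants whenever the corresponding graph is admissible; Proposition~\ref{prop:sgn} packages exactly the sign bookkeeping needed, telling us that $\det(A^1_b)\det(A^d_a)$ and $\det(A^1_a)\det(A^d_b)$ have opposite signs (when nonzero) and that $\det(A^{1,d}_{a,b})$ has sign $\sigma\cdot(-1)^{\ell(v)}$, so the right-hand side is nonzero with a definite sign and hence $\det(A)$ is nonzero with sign $(-1)^{\ell(v)}$ times a positive quantity.

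\textbf{The admissibility bookkeeping.} The remaining work — and the place I expect the most care is needed — is tracking $(R,C)$-admissibility through the deletions, mirroring the argument in the proof of Proposition~\ref{prop:partitiondet}. I need: (i) if $\gr{v,w_0}$ is not $(R,C)$-admissible then $\det(A) = 0$ (this is immediate: two equal rows or columns); (ii) if $\gr{v,w_0}$ \emph{is} $(R,C)$-admissible, then enough of the smaller graphs appearing in Lewis Carroll are admissible (with the right induced labels) that the right-hand side of the identity is provably nonzero. As in the Young-diagram case, some of the five smaller determinants may legitimately vanish (e.g.\ the analogue of $\det A^b_a$), but Proposition~\ref{prop:sgn} is stated precisely so that the products that could vanish are allowed to, while $\det(A^1_a)\det(A^d_b)$ and $\det(A^{1,d}_{a,b})$ are forced nonzero; I must verify that $(R,C)$-admissibility of $\gr{v,w_0}$ does propagate to $(R',C')$-admissibility of the graphs underlying those forced-nonzero terms, using that row/column labels are weakly increasing and that deleting rows/columns above a bounding box shifts supports uniformly (the same ``$p>a$ so supports stay distinct'' mechanism as in Proposition~\ref{prop:partitiondet}). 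Assembling (i), (ii), and the sign computation gives $(-1)^{\ell(v)}\det M(R,C)\res{\gr{v,w_0}} \geq 0$ with equality exactly when $\gr{v,w_0}$ fails to be $(R,C)$-admissible, which is the claim. The base cases $n = 1, 2$ are trivial, and the single-bounding-box base case is Corollaries~\ref{cor:partitionimm} and~\ref{cor:partitioncompimm}.
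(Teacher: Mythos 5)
Your overall architecture is the same as the paper's: induction on $n$, with the Young-diagram, complement, and block-antidiagonal cases dispatched by \cref{cor:partitionimm}, \cref{cor:partitioncompimm}, and \cref{prop:ImmMultipleBlock}, then Lewis Carroll's identity on rows $a, b=v^{-1}(1)$ and columns $1, d=v_a$ with \cref{prop:sgn} doing the sign bookkeeping. But the proposal leaves a genuine gap exactly where you flag that ``the most care is needed'': the propagation of $(R,C)$-admissibility to the smaller graphs is asserted, not proved, and the mechanism you point to is not the right one. Your phrase ``deleting rows/columns above a bounding box shifts supports uniformly (the same $p>a$ mechanism as in \cref{prop:partitiondet})'' describes what happens for $(M(R,C)\res{\gr{v,w_0}})^1_b$, where $(1,b)$ is not a spanning corner and $\gr{y,w_0}=\gr{v,w_0}^1_b$. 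It does not cover $(M(R,C)\res{\gr{v,w_0}})^d_a$: there $(a,v_a)$ \emph{is} a spanning corner, so by \cref{prop:deleteDotDetAnti}(1) the graph $\gr{z,w_0}$ is obtained from $\gr{v,w_0}$ by deleting row $a$, column $d$, \emph{and} the region $Q$ of points sandwiched only by non-inversions through $a$. Deleting $Q$ strips initial segments from many rows and columns non-uniformly, so two rows with equal labels and distinct supports in $\gr{v,w_0}$ could a priori acquire equal supports in $\gr{z,w_0}$. Ruling this out is the heart of the paper's proof (it ends in a specific contradiction forcing $p=a-1$, $p'=a+1$ with row $a+1$ starting at column $d$), and nothing in your proposal substitutes for it.

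There is also a bookkeeping error that matters because \cref{prop:sgn} is asymmetric. Its hypothesis is that $\det(A^1_b)\det(A^d_a)\neq 0$; that is the product you must force to be nonzero via admissibility, while $\det(A^1_a)\det(A^d_b)$ is the term allowed to vanish (the analogue of $\det(A^b_a)\det(A^n_n)$ in \cref{prop:partitiondet}), and $\det(A^{1,d}_{a,b})\neq 0$ is \emph{deduced} from the identity once the right-hand side is shown nonzero, not verified independently. Your admissibility paragraph names $\det(A^1_a)\det(A^d_b)$ and $\det(A^{1,d}_{a,b})$ as the forced-nonzero terms; establishing nonvanishing of those would not let you invoke \cref{prop:sgn} at all. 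Fixing the proof requires redirecting the admissibility argument to $A^1_b$ and $A^d_a$ and then actually carrying out the support analysis for the spanning-corner deletion.
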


\begin{proof}
First, if $\gr{v, w_0}$ is not $(R,C)$-admissible, the determinant in question is obviously zero. So we assume $\gr{v, w_0}$ is $(R,C)$-admissible.

We follow the proof of \cite[Theorem 4.18]{CSB}, and proceed by induction on $n$. If $\gr{v, w_0}$ is a partition, a complement of a partition, or block-antidiagonal, we are done by \cref{cor:partitionimm}, \cref{cor:partitioncompimm}, or \cref{prop:ImmMultipleBlock}, respectively.

So we may assume that $v$ has at least 2 bounding boxes and that adjacent bounding boxes have nonempty intersection (where bounding boxes are ordered as usual by the row of their northeast corner). Because $v$ avoids 1324 and 2143, the final two bounding boxes of $\gr{v,w_0}$ are of opposite color by \cref{prop:alternatingBoxesAnti}. Without loss of generality, we assume the final box is red and the second to last box is blue. Otherwise, we can consider the antidiagonal transpose of $M(R,C)\res{\gr{v, w_0}}$. This is equal to $(\dot{w_0} M^T \dot{w_0})(\overline{C}, \overline{R})\res{\gr{w_0 v^{-1} w_0,w_0}}$ (using the notation in the proof of \cref{cor:partitioncompdet}) and has the same determinant as $M(R,C)\res{\gr{v, w_0}}$.

This means the final box is $\bbox{n, v_n}$, and the second to last box is $\bbox{a, v_a}$ for some $a<n$ with $1<v_a<v_n$. We analyze the sign of $\det M(R,C)\res{\gr{v, w_0}}$ using Lewis Carroll's identity on rows $a, b:=v^{-1}(1)$ and columns $1, d:=v_a$. Note that $a<b$ and $1<d$.

The proof of \cite[Theorem 4.18]{CSB} shows that each of the 5 known determinants in this Lewis Carroll's identity is equal to $\det M(R',C')\res{\gr{v', w_0}}$ for an appropriate choice of multisets $R', C'$ and permutation $v'$. We first show that two of these determinants, forming a single term on the right-hand side of the identity, are non-zero.

\begin{enumerate}
    %   \item Consider $(M(R, C)\res{\gr{v,w_0}})^{1, d}_{a, b}$. The determinant of this matrix is equal to $\det(M(R', C')^{1, d}_{a, b}\res{\gr{x,w_0}})$ where $x$ is the permutation obtained from $v$ by deleting 1 and $v_a$ from $v$ in one-line notation and shifting remaining values to obtain a permutation of $[n-2]$. Clearly, $R'=R\setminus\{r_a, r_b\}$ and $C'=C\setminus \{c_1, c_d\}$.
    %   \item Consider $(M\res{\gr{v,w_0}})^1_a$.  The determinant of this matrix is equal to the determinant of $M^1_a\res{\gr{y,w_0}}$, where $y$ is obtained from $v$ by adding $v_n+1$ to the end of $v$ in one-line notation and shifting appropriately to get $w$, then deleting 1 and $v_a$ from $w$ and shifting values to obtain a permutation of $[n-1]$. 
       
    %   \item Consider $(M\res{\gr{v,w_0}})^d_b$. The determinant of this matrix is equal to the determinant of $M^d_b\res{\gr{z,w_0}}$, where $z$ is obtained from $v$ by deleting $v_n$ from $v$ in one-line notation and shifting as necessary.
      
      \item Consider $(M(R,C)\res{\gr{v,w_0}})^1_b$. By \cref{prop:deleteDotDetAnti}, the determinant of this matrix is equal to the determinant of $M(R', C')\res{\gr{y,w_0}}$, where $y$ is obtained from $v$ by deleting 1 from $v$ in one-line notation and shifting appropriately, $R'=R \setminus \{r_b\}$ and $C'=C \setminus \{r_1\}$. 
      
      We will check that $\gr{y, w_0}$ is $(R', C')$-admissible. Note that because $(1, b)$ is not a spanning corner of $\gr{v, w_0}$, $\gr{y, w_0}=\gr{v, w_0}^1_b$ by \cref{prop:deleteDotDetAnti}. So we first check that removing column $1$ and row $b$ from $\gr{v, w_0}$ does not create any rows $i, j$ with both the same support and the same labels. By \cite[Theorem 4.18, pf. of (2)]{CSB}, rows $b, \dots, n$ of $\gr{v, w_0}$ all have support $\{1, \dots, v_n\}$. Note that removing column $1$ from $\gr{v, w_0}$ shortens rows $b, \dots, n$ by one and does not effect other rows, so it suffices to check that rows $b-1, \dots, n-1$ in $\gr{y, w_0}$ have distinct labels. Since $\gr{v, w_0}$ is $(R, C)$-admissible and rows $b, \dots, n$ of $\gr{v, w_0}$ have the same support, we must have $r_{b-1}\leq r_b<r_{b+1}< \cdots<r_n$. So, letting $r'_i$ denote the elements of $R'$, indexed in increasing order, we have $r'_{b-1}<r'_b <\cdots <r'_{n-1}$. 
      
      We now show there are no columns in $\gr{y, w_0}$ with both the same support and same labels. Columns $1, \dots, v_n$ of $\gr{v, w_0}$ have support containing $[b, n]$, and columns $v_n+1, \dots n$ have support contained in $[1, b-1]$. Removing row $b$ removes one element from the support of columns $1, \dots, v_n$ and does not effect other columns. Any two columns with the same support in $\gr{y, w_0}$ correspond to two columns with the same support in $\gr{v, w_0}$, and thus have different labels by the $(R, C)$-admissibility of $\gr{v, w_0}$.
      
      \item Consider $(M(R, C)\res{\gr{v,w_0}})^d_a$. By \cref{prop:deleteDotDetAnti}, the determinant of this matrix is equal to the determinant of $M(R', C')^d_a\res{\gr{z,w_0}}$, where $z$ is obtained from $v$ by deleting $v_a$ from $v$ in one-line notation and shifting appropriately, $R'=R\setminus\{r_a\}$, and $C'=C\setminus \{c_d\}$. See Figure~\ref{fig:deleteDotDetEx} for an example.
      
      \begin{figure}
          \centering
          \includegraphics[width=0.8\textwidth]{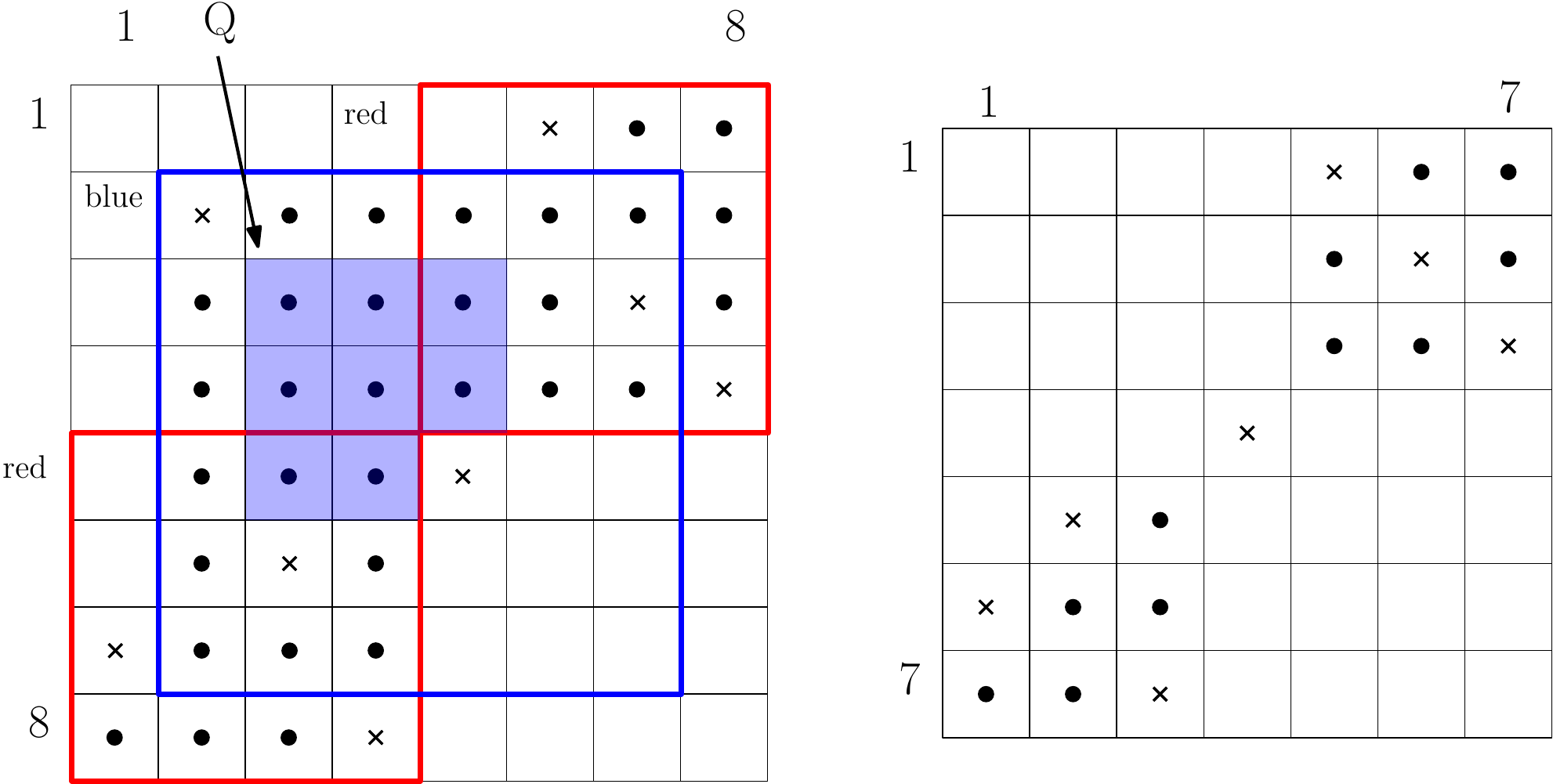}
          \caption{On the left, $\gr{v,w_0}$ for $v=62785314$. Elements of $\gr{v}$ are marked with crosses. On the right, $\gr{z,w_0}$ where $z=5674213$ is the permutation obtained by deleting 2 from the one-line notation of $v$ and shifting remaining numbers appropriately. Note that $\gr{z,w_0}$ is obtained from $\gr{v, w_0}$ by deleting row 2, column 2, and the shaded region $Q$, consisting of elements sandwiched only by non-inversions of the form $\lrangle{2, i}$.}
          \label{fig:deleteDotDetEx}
      \end{figure}
      
      As $(a, v_a)$ is a spanning corner of $\gr{v, w_0}$, $\gr{z, w_0}$ is obtained from $\gr{v, w_0}$ by deleting row $a$, column $d$, and the subset $Q \subset [n]^2$ consisting of all elements $(p, q)$ which are sandwiched only by a non-inversion of the form $\lrangle{a, i}$ (see Figure~\ref{fig:deleteDotDetEx}). Note that if $(p, q) \in Q$, then row $p$ of $\gr{v, w_0}$ has support $\{d, d+1, \dots, d+j\}$ for some $j$ and column $q$ of $\gr{v, w_0}$ has support $\{a, a+1, \dots, a+\ell\}$ for some $\ell$. Notice also that $Q$ consists of some initial chunk of each row and column of $\gr{v, w_0}$ it intersects; thus, deleting elements of $Q$ will not change the largest number in the support of any row or column. Since all corners of $\gr{v, w_0}$ are elements of $\gr{v}$ and $(a, d) \in \gr{v}$, there are no other corners in row $a$ or column $d$. So $a$ (resp. $d$) cannot be the largest element in the support of a column (resp. row). So for row $p$ in $\gr{v,w_0}_a^d$, with $\ell$ the largest element in the support of $p$,  $\delta^{-1}_d(\ell)$ is the largest element in the support of row $\delta^{-1}_a(p)$ in $\gr{v,w_0}$. An analogous statement holds for column $q$ in $\gr{v,w_0}_a^d$.
      
      Consider rows $p<p'$ of $\gr{v, w_0}$ with $r_p=r_p'$ and $p, p' \neq a$. Because $R$ is listed in weakly increasing order, $r_p=r_{p+1}=\cdots=r_{p'}$. By assumption, the support of these rows in $\gr{v, w_0}$ must be different. Suppose rows $s=\delta_{a}(p)$, $s'=\delta_a(p')$ have the same support in $\gr{z, w_0}$; say $\ell$ is the largest number in their support. The reasoning in the above paragraph implies that $\delta^{-1}_d(\ell)$ is the largest number in the support of rows $p, p'$ in $\gr{v, w_0}$, and thus also in rows $p, p+1, \dots, p'-1, p'$. So the smallest number in the support of rows $p,p+1, \dots, p'$ must be different. On the other hand, after deleting column $d$ and the elements of $Q$, the supports should be the same. These deletions remove the first element of a row only if that first element is in column $d$. Putting these together, we must have $p=a-1$, $p'=a+1$, and row $a+1$ has support starting at $d$; otherwise we obtain rows of $\gr{v, w_0}$ with the same label and same support. But now row $a$ is among rows $p, p+1, p'$, and rows $a$ and $p'=a+1$ have support starting at $d$, a contradiction.
      
      An identical argument with columns in place of rows shows that no two columns of $\gr{z, w_0}$ have the same support and the same label. So $\gr{z, w_0}$ is $(R', C')$-admissible.
\end{enumerate}

So by the inductive hypothesis, one term on the right-hand side of the identity is nonzero. Let $\sigma$ denote the sign of this term. By \cref{prop:sgn}, the other term on the right-hand side has sign $-\sigma$ if it is nonzero. In either case, the right-hand side has sign $\sigma$, and in particular is nonzero. Thus, both determinants on the left-hand side are non-zero. By \cref{prop:sgn}, the determinant $\det (M(R, C)\res{\gr{v, w_0}})^{1, d}_{a, b}$ has sign $\sigma \cdot (-1)^{\ell(v)}$, so dividing through by that determinant shows that $\det M(R, C)\res{\gr{v, w_0}}$ has sign $\ell(v)$.

%So by the inductive hypothesis, one term on the right-hand side of the identity is nonzero. Let $\sigma$ denote the sign of this term. By the proof of \cite[Theorem 4.18]{CSB}, the other term on the right-hand side has sign $-\sigma$ if it is nonzero. In either case, the right-hand side has sign $\sigma$, and in particular is nonzero. Thus, both determinants on the left-hand side are non-zero. By the proof of \cite[Theorem 4.18]{CSB}, the determinant $\det (M(R, C)\res{\gr{v, w_0}})^{1, d}_{a, b}$ has sign $\sigma \cdot (-1)^{\ell(v)}$, so dividing through by that determinant shows that $\det M(R, C)\res{\gr{v, w_0}}$ has sign $\ell(v)$.

\end{proof}

Taking this theorem with \cref{lem:dualCanonicalDet}, we can now prove \cref{thm:main-sq}.

\begin{proof}[Proof of \cref{thm:main-sq}]
By \cref{lem:dualCanonicalDet}, $$\Imm_v M(R, C)= (-1)^{\ell(v)} \det M(R, C)\res{\gr{v, w_0}}.$$

Let $k'\leq k$ be the size of the largest square in $\gr{v,w_0}$. By \cref{thm:detSign}, for $M$ $k'$-positive, the right hand side of this expression is positive. Any $k$-positive matrix is also $k'$-positive, so we are done. 
\end{proof}

\section{Future Directions}\label{sec:future}

The results in \cite{CSB} and this paper were inspired by the following conjecture of Pylyavskyy.

\begin{conj}[\hspace{1pt}\cite{Pyl}]\label{conj:Pasha}
Let $0<k<n$ be an integer and let $v \in S_n$ avoid the pattern $12\cdots (k+1)$. 
Then $\Imm_v X$ is $k$-positive. 
\end{conj}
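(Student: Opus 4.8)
The plan is to push the Desnanot--Jacobi strategy of Section~\ref{sec:proof} to arbitrary $v$. The obstruction is that \cref{prop:immDet} --- the determinantal identity driving that whole induction --- holds only when $v$ avoids $1324$ and $2143$, so the first task is a substitute for it valid for all $v$. I would look for a \emph{quadratic, Desnanot--Jacobi-type identity for Kazhdan--Lusztig immanants}: a relation expressing $\Imm_v X$ in terms of immanants $\Imm_{v'}X'$ of strictly simpler $v'$ (shorter permutations, or permutations with fewer occurrences of $1324$ and $2143$, evaluated on submatrices of $X$). Such identities are plausible --- the dual canonical basis is ``cluster-like'' multiplicatively, and there are already quadratic relations among immanants in work of Rhoades--Skandera (e.g.\ in the Temperley--Lieb setting) --- but isolating a version with usable sign control is, I expect, the core of the problem.

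Granting such an identity, I would argue in two layers. First, the conjecture when $v$ avoids $1324$ and $2143$: by \cref{prop:immDet}, $\Imm_v X = (-1)^{\ell(v)}\det(X\res{\gr{v,w_0}})$, and \cref{thm:detSign} already gives positivity on matrices positive up to the size of the largest square in $\gr{v,w_0}$. The remaining gap is that ``$v$ avoids $12\cdots(k+1)$'' only bounds the longest increasing subsequence $\mathrm{LIS}(v)$, which can be strictly smaller than that largest square --- for instance $v=1432$ has $\mathrm{LIS}(v)=2$ but $\gr{v,w_0}$ contains a $3\times 3$ square. So one must strengthen \cref{thm:detSign} to: $(-1)^{\ell(v)}\det M(R,C)\res{\gr{v,w_0}}\ge 0$ already for $M$ that is $\mathrm{LIS}(v)$-positive. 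I expect this from rerunning the Lewis Carroll induction of \cref{thm:detSign} while exploiting the rigidity of $\gr{v,w_0}$ when $\mathrm{LIS}(v)$ is small: a large square region is then largely forced, and the minors of size exceeding $\mathrm{LIS}(v)$ appearing in the recursion should vanish or be removable using the quadratic identity. Second, reduce general $v$ to this case by inducting on the number of occurrences of $1324$ and $2143$ in $v$, using the quadratic identity to peel such patterns off while preserving the hypothesis that $v$ avoids $12\cdots(k+1)$.

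The decisive obstacle is precisely the quadratic identity together with its sign analysis: unlike in \cref{thm:detSign}, the coefficients $P_{w_0 w,\, w_0 v}(1)$ of $\Imm_v X$ need not lie in $\{0,1\}$, so $\Imm_v X$ is a genuine signed sum rather than one determinant, and Kazhdan--Lusztig data behaves delicately under pattern operations. Two alternative lines that sidestep this, each with its own difficulty: (i) use that $\Imm_v X$ is a dual canonical basis element together with Lusztig's positivity of the comultiplication of $\CC[SL_n]$ in the dual canonical basis, reducing the claim to a factorization of a $k$-positive matrix into factors on which the relevant dual canonical basis elements are already known positive (the catch: $k$-positive matrices have no Loewner--Whitney-type factorization into elementary pieces, and one must match ``$v$ avoids $12\cdots(k+1)$'' to exactly which dual canonical basis elements may occur --- not all of them, since $\Imm_e X=\det X$ is not $k$-positive for $k<n$); or (ii) interpret $\Imm_v X$ inside the cluster algebra structures on $k$-positive matrices of \cite{BCM,CKST} and show it is a nonnegative combination of cluster monomials (the catch: understanding those cluster structures finely enough).
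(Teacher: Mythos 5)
The statement you are trying to prove is \cref{conj:Pasha}, which the paper presents as an open conjecture of Pylyavskyy and explicitly states ``remains open.'' There is no proof in the paper to compare against, and your submission is not a proof either: it is a research program in which both load-bearing steps are left as unestablished hopes. The first, a quadratic Desnanot--Jacobi-type identity for Kazhdan--Lusztig immanants of \emph{arbitrary} $v$ with controllable signs, is exactly what the determinantal formula of \cref{prop:immDet} provides in the $1324$-, $2143$-avoiding case and what is missing in general; you correctly note that the coefficients $P_{w_0w,w_0v}(1)$ need not lie in $\{0,1\}$, so $\Imm_v X$ is not a single determinant, but you give no candidate identity and no reason beyond analogy that one with usable sign control exists. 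Without it, neither your reduction on occurrences of $1324$ and $2143$ nor your treatment of the pattern-avoiding base case can start.

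The second gap is equally serious and is one you also flag yourself: even for $v$ avoiding $1324$ and $2143$, the paper's machinery (\cref{thm:detSign}, via \cref{lem:sq-inversions}) gives $k$-positivity only when the largest square in $\gr{v,w_0}$ has size at most $k$, which is a strictly weaker conclusion than the conjecture demands. Your example $v=1432$ is apt: it avoids $123$, so the conjecture asserts $2$-positivity, but $\gr{v,w_0}$ contains a $3\times 3$ square (the non-inversion $\lrangle{1,3}$ has $j-i\ge 2$ and $v_j-v_i\ge 2$), so \cref{thm:detSign} only yields $3$-positivity. Your proposed fix --- rerun the Lewis Carroll induction and argue that the offending large minors ``vanish or are removable'' --- is not an argument; the induction in \cref{thm:detSign} genuinely uses positivity of minors up to the largest-square size, and nothing in the paper suggests those minors are dispensable. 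The alternative routes you sketch (comultiplication positivity, cluster structures on $k$-positive matrices) are likewise identified together with their own unresolved obstructions. In short, you have accurately mapped the difficulty of the problem, but no step of the conjecture has been proved.
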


This conjecture remains open. The relation between pattern avoidance and $k$-positivity of immanants is an interesting direction of further inquiry.

The results of this paper showcase an interesting phenomenon: the behavior of the dual canonical basis element $\Imm_v X(R, C)$ on $k$-positive matrices is the same as the behavior of the usual Kazhdan-Lusztig immanant $\Imm_v X$. Based on this, we make the following conjecture. 

\begin{conj}\label{conj:signControl}
Suppose $\Imm_v X$ is $k$-positive. Then as long as $\Imm_v X(R, C)$ is not identically zero, $\Imm_v X(R, C)$ is $k$-positive.
\end{conj}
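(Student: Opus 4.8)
The plan is to split the statement into a ``soft'' nonnegativity part and a ``hard'' strict-positivity part. For the soft part, fix a $k$-positive $m\times m$ matrix $M$; then $M(R,C)$ is $k$-\emph{nonnegative}, since any minor of $M(R,C)$ of size $p\le k$ either repeats a row or a column (and so is $0$) or equals an honest $p\times p$ minor of $M$ (and so is positive). One then checks that $M(R,C)$ lies in the closure of the set of $k$-positive $n\times n$ matrices --- for instance by perturbing the rows and columns of $M(R,C)$ carrying repeated labels of $R,C$ in a suitable direction, just as totally positive matrices are dense in totally nonnegative ones. Since $\Imm_v X$ is by hypothesis a polynomial that is positive on $k$-positive matrices, continuity then gives $\Imm_v M(R,C)\ge 0$. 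So $\Imm_v X(R,C)$ is at worst $k$-nonnegative, and the real content of the conjecture is that, away from the locus where $\Imm_v X(R,C)\equiv 0$, the value is never exactly $0$.

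The hard part is this ``no boundary zero'' statement: ruling out $\Imm_v M(R,C)=0$ for a genuinely $k$-positive $M$ when $\Imm_v X(R,C)\not\equiv 0$. This is the $k$-positive analogue of the fact that Lusztig's dual canonical basis elements, although merely nonnegative on the totally nonnegative part, are \emph{strictly} positive on the totally positive part --- and that strict version is the substantially deeper half. My plan would be to produce a manifestly subtraction-free expression for $\Imm_v X(R,C)$ with the property that some summand is strictly positive on every $k$-positive matrix, while the whole expression collapses to the zero polynomial only in the excluded degenerate cases. In the $1324$-, $2143$-avoiding setting this paper gets exactly such an expression ``for free'' from the single-determinant formula of \Cref{lem:dualCanonicalDet} and the Lewis Carroll induction in \Cref{thm:detSign}; for general $v$ no determinantal formula is available, so one needs a replacement --- most naturally a network/Lindstr\"om--Gessel--Viennot-type model for the dual canonical basis element, or the cluster-algebra structure on $k$-positive matrices \cite{BCM,CKST}. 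In the cluster picture the idea would be: if $\Imm_v X$ is $k$-positive it should be a positive Laurent polynomial in the variables of some seed of the $k$-positive variety, and the ring map $X\mapsto X(R,C)$ realizes a degeneration of that variety under which this positive Laurent expression specializes to a positive expression for $\Imm_v X(R,C)$ that vanishes identically precisely when $\Imm_v X(R,C)\equiv 0$.

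An alternative, more hands-on route is to imitate the inductive scheme in the proof of \Cref{thm:detSign}: first determine the exact combinatorial condition on $(v,R,C)$ for $\Imm_v X(R,C)$ to be identically zero (generalizing the $(R,C)$-admissibility of $\gr{v,w_0}$ used here), then induct by peeling off one repeated label of $R$ or $C$ at a time, relating $\Imm_v M(R,C)$ to dual canonical basis elements on smaller matrices with fewer repetitions via a branching or quadratic identity for immanants, with base case $R=C=[n]$ being precisely the hypothesis that $\Imm_v X$ is $k$-positive. The delicate point in this route --- and, I expect, the main obstacle overall --- is the ``no cancellation'' lemma needed at each step: guaranteeing that the nonnegative terms produced by the identity do not sum to zero unless one is already in the degenerate locus. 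That is genuinely hard because it demands control over the positive combinatorics of an \emph{arbitrary} dual canonical basis element, the same kind of information that remains elusive in Pylyavskyy's \Cref{conj:Pasha}; a full proof of the present conjecture is therefore likely to come bundled with substantial new understanding of $k$-positivity of Kazhdan--Lusztig immanants in general.
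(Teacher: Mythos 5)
This statement is one of the paper's open conjectures (the paper offers no proof of it; it is stated as motivation for future work), so there is no argument of the authors' to compare yours against. Your proposal is candid that it is a plan rather than a proof, and indeed it does not close the conjecture: the entire mathematical content of the statement is the strict-positivity ("no boundary zero") half, which you leave unresolved. Neither of your two proposed routes is carried out --- the subtraction-free/cluster-theoretic expression for a general dual canonical basis element that you would need is precisely the missing ingredient behind \Cref{conj:Pasha} as well, and the "branching or quadratic identity for immanants" in your second route is not exhibited. So the gap is not a technical step; it is the theorem itself.

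Two smaller points on the "soft" half. First, the claim that $M(R,C)$ lies in the closure of the $k$-positive $n\times n$ matrices is not automatic: it is not known (and not obviously true) that $k$-positive matrices are dense in $k$-nonnegative ones, so "perturbing the repeated rows in a suitable direction" needs an actual construction. A clean way to do it is to write $M(R,C)=PMQ$ with $P$ and $Q$ the $0$--$1$ row/column-selection matrices, approximate $P$ and $Q$ by totally positive rectangular matrices, and apply Cauchy--Binet to see that $P_\epsilon MQ_\epsilon$ is $k$-positive and converges to $M(R,C)$; this yields $\Imm_v M(R,C)\ge 0$ rigorously. Second, note that this nonnegativity, while correct, is strictly weaker than what the paper's own results already give in the $1324$-, $2143$-avoiding case (\Cref{thm:detSign} obtains strict positivity exactly off the $(R,C)$-inadmissible locus via the Lewis Carroll induction), which underscores that the conjecture's difficulty lies entirely in the part your proposal defers.
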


We also make a related conjecture based on the same phenomenon, which is something of an intermediate conjecture; it would imply Conjecture~\ref{conj:Pasha} and would be implied by Conjectures~\ref{conj:Pasha} and \ref{conj:signControl} together.

\begin{conj}\label{conj:PashaStrong}
Let $0<k<n \leq m$ be integers and let $v \in S_n$ avoid the pattern $12\cdots (k+1)$. Let $R, C \in \multichoose{[m]}{n}$. If $\Imm_v X(R, C)$ is not identically zero, then $\Imm_v X(R, C)$ is $k$-positive. 
\end{conj}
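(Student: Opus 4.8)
The statement is implied by Conjectures~\ref{conj:Pasha} and~\ref{conj:signControl} taken together, as noted above, so my plan is to attack those two conjectures, and I organize the discussion accordingly. For the specialization conjecture, \cref{conj:signControl}, one assumes $\Imm_v X$ is $k$-positive and wants $\Imm_v X(R,C)$ to be $k$-positive whenever it is not identically zero. The first step is to characterize combinatorially exactly which data $(v, R, C)$ make $\Imm_v X(R, C)\equiv 0$, generalizing the $(R,C)$-admissibility criterion for $\Gamma[v,w_0]$ used in \cref{sec:prelim}: I expect vanishing to be forced precisely when two rows (or columns) of $X(R,C)$ with equal labels are made equal as formal objects by the support pattern of the monomials occurring in $\Imm_v$. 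Writing $R=\{r_1\le\cdots\le r_n\}$ and $C=\{c_1\le\cdots\le c_n\}$, one can then induct on the number of coincidences $r_i=r_{i+1}$ and $c_j=c_{j+1}$: each coincidence realizes $\Imm_v X(R,C)$ as a specialization of a Kazhdan--Lusztig immanant in a matrix with strictly fewer repeated rows or columns, and the goal is to propagate positivity through these specializations.

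For the nonnegativity half of that step there is a clean route: a matrix obtained from a $k$-positive matrix by repeating rows and columns is $k$-nonnegative (as observed in the introduction), and if such matrices are limits of genuinely $k$-positive matrices of the same size --- a Whitney/Loewner-type density statement for $k$-positivity that would itself need to be established, presumably along the lines of \cite{BCM,Choud1,Choud2} --- then nonnegativity of $\Imm_v X(R,C)$ follows from $k$-positivity of $\Imm_v X$ by continuity. The delicate part is upgrading nonnegativity to strict positivity on the $k$-positive cone: nonnegativity together with ``not identically zero'' does not by itself rule out zeros in an open set (a nonzero square of a polynomial vanishes on a hypersurface), so nonvanishing must be produced directly. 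The natural mechanism is an inductive Desnanot--Jacobi-style argument as in the proof of \cref{thm:detSign}, but carried out for the whole immanant rather than a single determinant; this would need either an analogue of Lewis Carroll's identity for $\Imm_v$, or an expansion of $\Imm_v X(R,C)$ into a signed sum of ordinary minors of $X(R,C)$ whose dominant terms can be played against one another. Producing such a recursion valid for every $12\cdots(k+1)$-avoiding $v$, not just the $1324$- and $2143$-avoiding ones for which $\Imm_v$ is a single determinant, is the principal obstacle on this side.

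Conjecture~\ref{conj:Pasha} itself is the remaining, and I expect harder, ingredient. Here I would begin from the Temperley--Lieb expansion of $\Imm_v X$ of Rhoades and Skandera~\cite{RS}, which writes $\Imm_v X$ as a \emph{nonnegative} integer combination of Temperley--Lieb immanants, and try to show that the Temperley--Lieb immanants occurring for a $12\cdots(k+1)$-avoiding $v$ are each $k$-nonnegative, with at least one strictly positive on every $k$-positive matrix. This requires translating the avoidance hypothesis into a bound on the complexity (e.g.\ the maximal nesting width) of the noncrossing matchings indexing the participating Temperley--Lieb immanants, in the same spirit that the present paper and \cite{CSB} translate $1324$/$2143$-avoidance plus the square-size bound into the bounding-box structure of $\Gamma[v,w_0]$. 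Since even $k$-positivity of individual Temperley--Lieb immanants is currently open, this is where I expect the real difficulty to lie; resolving \cref{conj:Pasha} along these lines and combining it with the specialization argument above would then give the statement in full.
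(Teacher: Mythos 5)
The statement you are asked to establish is stated in the paper only as a conjecture (\cref{conj:PashaStrong}); the paper offers no proof of it, and neither does your proposal. What you have written is a research plan, not an argument: its very first move is to reduce \cref{conj:PashaStrong} to \cref{conj:Pasha} and \cref{conj:signControl} taken together (a reduction the paper itself already records in Section~\ref{sec:future}), and both of those remain open. Every subsequent step is conditional or explicitly deferred: the density of $k$-positive matrices in the relevant $k$-nonnegative matrices is something you say ``would itself need to be established''; the continuity argument, as you concede, only yields nonnegativity, and the mechanism you propose for upgrading to strict positivity --- a Desnanot--Jacobi recursion for the full immanant $\Imm_v$ rather than for a single determinant --- does not currently exist, since the identity used in \cref{thm:detSign} is an identity for determinants and the paper's whole method hinges on the $1324$-, $2143$-avoiding case being expressible as one determinant (\cref{lem:dualCanonicalDet}). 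Likewise, the Temperley--Lieb route to \cref{conj:Pasha} presupposes $k$-positivity (or even $k$-nonnegativity) statements for Temperley--Lieb immanants that you acknowledge are open.

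So the genuine gap is simply that no part of the chain is proved: the statement is not a consequence of anything established in the paper or in your proposal, and the two conjectures you lean on are strictly harder than (or equivalent in difficulty to) the target. If your goal were a result provable with the paper's tools, the correct scope is \cref{thm:main-sq}/\cref{thm:detSign}, where the pattern conditions $1324$ and $2143$ make $\Imm_v X(R,C)$ a single signed determinant and Lewis Carroll's identity applies; for general $12\cdots(k+1)$-avoiding $v$ no analogous determinantal formula or recursion is available, and that is precisely why \cref{conj:PashaStrong} is left as a conjecture.
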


The compact determinantal formulas we give for certain dual canonical basis elements may be useful to understand the relationship between the dual canonical basis of $\CC[SL_m]$ and its cluster algebra structure. Technically, the cluster algebra in question is the coordinate ring of $G^{w_0, w_0}$, the open double Bruhat cell in $SL_m$; $\CC[G^{w_0, w_0}]$ differs from $\CC[SL_m]$ by localization at certain principal minors. The cluster monomials of $\CC[G^{w_0, w_0}]$ are expected to be dual canonical basis elements. One natural question is: do the cluster monomials include the functions $\Imm_v X(R, C)$, where $v$ avoids 2143 and 1324? If so, can the $k$-positivity of these immanants be explained from a cluster algebraic viewpoint?

Work related to these questions appeared in the manuscript \cite{CJS}; the connection to Kazhdan-Lusztig immanants is explained in~\cite[Section 3.3]{Chepuri}. The results of \cite{CJS} show that $\Imm_v X(R, C)$ is a cluster variable for $v$ avoiding 123, 2143, 1432, and 3214. The immanants occurring in \cite{CJS} have a determinantal form given by \cref{lem:dualCanonicalDet}; they further conjecture that all cluster variables of $\CC[G^{w_0,w_0}]$ can be written as $\pm \det X(R, C)\res{P}$ for some $P \subset [n^2]$. Conjecturally, the Kazhdan--Lusztig immanants that can be written as $\pm \det X(R, C)\res{P}$ are the exactly $\Imm_v X(R, C)$ where $v$ is 2143 and 1324 avoiding. This leads to the following conjecture.

\begin{conj}\label{conj:cluster} Fix $m$ and let $G^{w_0, w_0}$ denote the big open double Bruhat cell in $SL_m$.
\begin{enumerate}
    \item All cluster variables of $\CC[G^{w_0,w_0}]$ are of the form $\Imm_v X(R, C)$ for some $v$ avoiding $2143$ and $1324$.
    \item For $v \in S_n $ avoiding $2143$ and $1342$ and $R, C \in \multichoose{[m]}{n}$ with $\gr{v, w_0}$ $(R, C)$-admissible, $\Imm_v X(R, C)$ is a cluster variable in $\CC[G^{w_0,w_0}]$ if it is irreducible and a cluster monomial otherwise.
\end{enumerate}

\end{conj}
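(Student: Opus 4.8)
The plan would be to derive Conjecture~\ref{conj:cluster} from three inputs: Skandera's description of the dual canonical basis of $\CC[SL_m]$ as the nonzero $\Imm_v X(R,C)$ (\cref{prop:canonicalImm}), the determinantal normal form $\Imm_v X(R,C)=(-1)^{\ell(v)}\det X(R,C)\res{\gr{v,w_0}}$ for $v$ avoiding $1324$ and $2143$ (\cref{lem:dualCanonicalDet}), and the cluster structure on $\CC[G^{w_0,w_0}]$ together with the results of \cite{CJS}. The recursive engine is Lewis Carroll's identity (\cref{prop:lc-id}): it is formally a cluster exchange relation, and the ``peel off the last bounding box'' induction already used to prove \cref{thm:detSign} should lift, step by step, to a sequence of mutations.

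For part (2) I would first pin down the classification underlying the statement: among all Kazhdan--Lusztig immanants, those expressible as $\pm\det X(R,C)\res{P}$ for some $P\subseteq[n]^2$ are exactly those with $v$ avoiding $1324$ and $2143$. \cref{prop:immDet} gives one inclusion; for the converse I would show that if the expansion in \eqref{eq:immFormula1} is monomial-wise single-signed, then $P_{w_0w,w_0v}(1)\in\{0,1\}$ on the whole interval $[v,w_0]$, and translate this ``boolean-like interval'' condition into the $1324/2143$ patterns via Deodhar/Billey--Warrington-type criteria (these are precisely the patterns controlling when $\gr{v,w_0}$ is an honest union of rectangles). With this in hand, I would realize each $(R,C)$-admissible $\det X(R,C)\res{\gr{v,w_0}}$ inside $\CC[G^{w_0,w_0}]$ by starting from a double-wiring-diagram seed of Berenstein--Fomin--Zelevinsky and running the \cref{thm:detSign} induction as a mutation sequence: at each stage one of the two right-hand terms of the Lewis Carroll identity is nonzero exactly as in that proof, and the identity itself is the exchange relation producing the new cluster variable. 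For the dichotomy in part (2) I would split on whether $\gr{v,w_0}$ is block-antidiagonal: \cref{prop:ImmMultipleBlock} makes the block case a product of smaller such immanants, hence a cluster monomial, while in the non-block case irreducibility of the resulting Laurent polynomial should be read off from the single-block bounding-box structure, giving a cluster variable.

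For part (1) I would invoke the (partially known) statement that every cluster monomial of $\CC[G^{w_0,w_0}]$ belongs to the dual canonical basis --- available for unipotent cells and related algebras through the work of Geiss--Leclerc--Schr\"oer and Kang--Kashiwara--Kim--Oh --- so that every cluster variable is some $\Imm_v X(R,C)$; intersecting this with the classification from part (2), together with the \cite{CJS} expectation that cluster variables have determinantal form, forces $v$ to avoid $2143$ and $1324$. As a sanity check, and to resolve whether ``$1342$'' in the statement is a typo for ``$1324$,'' I would first compute the full cluster structure of $\CC[G^{w_0,w_0}]$ in ranks $m=3,4$ by hand and match the cluster variables against the determinantal immanants.

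The main obstacle is twofold. Part (1) is not unconditional: the cluster-monomial-equals-dual-canonical statement for this particular double Bruhat cell must be established or cited with care, and the \cite{CJS} input that all cluster variables are $\pm\det X(R,C)\res{P}$ is itself conjectural. Part (2) is genuinely combinatorial: lifting the abstract Lewis--Carroll recursion to an honest mutation path requires tracking, uniformly in the multisets $R,C$, exactly which minors of $X(R,C)$ appear at each step, certifying that each intermediate seed really occurs in the cluster structure rather than only in an abstract recursion, and propagating the admissibility bookkeeping (rows and columns with equal labels and equal support) through every mutation. A realistic intermediate goal is to prove part (2) modulo the cluster-monomial conjecture --- that is, to show each $(R,C)$-admissible $\Imm_v X(R,C)$ lies in the upper cluster algebra and is mutation-reachable --- deferring the identification with the dual canonical basis.
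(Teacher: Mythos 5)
The statement you are trying to prove is \cref{conj:cluster}, which the paper states as an open conjecture in its ``Future Directions'' section; the paper offers no proof, so there is nothing to compare your argument against except the conjecture's plausibility. What you have written is a research program rather than a proof, and you are candid about this, but it is worth naming the gaps precisely. First, your route to part (1) rests on two inputs that are themselves conjectural: that cluster monomials of $\CC[G^{w_0,w_0}]$ lie in the dual canonical basis (the Geiss--Leclerc--Schr\"oer and Kang--Kashiwara--Kim--Oh results concern unipotent cells and quantum unipotent coordinate rings, and transporting them to this particular double Bruhat cell is not automatic), and the \cite{CJS} expectation that every cluster variable has the form $\pm\det X(R,C)\res{P}$, which the paper explicitly labels a conjecture. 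Second, for part (2), Lewis Carroll's identity is not literally a cluster exchange relation: an exchange relation is a sum of two cluster monomials, whereas \cref{prop:lc-id} has a minus sign between the two products on the right-hand side, and the inductive ``peel off a bounding box'' argument of \cref{thm:detSign} tracks only signs of determinants on $k$-positive matrices, not membership of the intermediate functions in any seed. Certifying that each step of that recursion is realized by an actual mutation in the BFZ cluster structure --- including constructing the exchange matrix and verifying that the coefficient/frozen data match --- is the hard content of the conjecture and is not supplied by anything in the paper. Your proposed classification of Kazhdan--Lusztig immanants expressible as $\pm\det X(R,C)\res{P}$ is likewise only conjectural in the paper (stated without proof just before \cref{conj:cluster}). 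Finally, you are right to flag ``$1342$'' in part (2) as a probable typo for ``$1324$,'' consistent with part (1) and with the rest of the paper. In short: no gap in your reasoning is fatal to the plan, but every load-bearing step currently rests on an unproved input, so this remains a conjecture.
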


%\begin{conj}
%Suppose $\Imm_v X(R,C)$ is equal to $\pm \det X(R, C) \res{P}$ for some $P \subset [n]^2$. Then $\Imm_v X(R, C)$ is a cluster variable if it is irreducible and a cluster monomial otherwise. \Comment{strongest version is: $\Imm_v X(R, C)$ is a cluster variable iff it is irreducible and can be written as a determinant.}
%\end{conj}

\section{Acknowledgements}

We would like to thank Pavlo Pylyavskyy for suggesting this topic to us. This material is based upon work supported by the National Science Foundation under Grant No. DMS-1439786 while the first author was in residence at the Institute for Computational and Experimental Research in Mathematics in Providence, RI, during the Spring 2021 semester. The second author was supported by an NSF Graduate Research Fellowship DGE-1752814.

%	\printbibliography
\bibliographystyle{siam}
\bibliography{bibliography}

\end{document}